\tikzset{
every picture/.style={line width=0.8pt, >=stealth,
                       baseline=-3pt,label distance=-3pt},
dotnode/.style={fill=black,circle,minimum size=2.5pt, inner sep=1pt, outer
sep=0},
morphism/.style={circle,draw,thin, inner sep=1pt, minimum size=15pt,
                 scale=0.8},
small_morphism/.style={circle,draw,thin,inner sep=1pt,
                       minimum size=10pt, scale=0.8},
coupon/.style={draw,thin, inner sep=1pt, minimum size=18pt,scale=0.8},
regular/.style={densely dashed},
edge/.style={thick, dashed, draw=blue, text=black},
boundary/.style={thick,  draw=blue, text=black},
overline/.style={preaction={draw,line width=2mm,white,-}},
drinfeld center/.style={>=stealth,green!60!black, double
distance=1pt,text=black},
cell/.style={fill=black!10},
subgraph/.style={fill=black!30},
midarrow/.style={postaction={decorate},
                 decoration={
                    markings,
                    mark=at position #1 with {\arrow{>}},
                 }},
midarrow/.default=0.5
}
\newtheorem{thm}{Theorem}[section]
\newtheorem{lem}[thm]{Lemma}
\newtheorem{prop}[thm]{Proposition}
\newtheorem{defn}[thm]{Definition}
\newtheorem{rmk}[thm]{Remark}
\newcommand{\Hs}{H}
\newcommand{\lstar}{{^*}}
\DeclareMathOperator{\id}{id}
\DeclareMathOperator{\MCG}{MCG}
\DeclareMathOperator{\Mod}{Mod}
\DeclareMathOperator{\Vect}{Vec}
\DeclareMathOperator{\Hom}{Hom}
\DeclareMathOperator{\Obj}{Obj}
\DeclareMathOperator{\Img}{Im}
\DeclareMathOperator{\coev}{coev}
\DeclareMathOperator{\ev}{ev}
\DeclareMathOperator{\Gr}{Graph}
\DeclareMathOperator{\VGr}{VGraph}
\newcommand{\VV}{\mathbf{V}}       
\newcommand{\vgo}{\Vect_G^\omega}
\newcommand{\one}{1}
\newcommand{\Ga}{\Gamma}
\newcommand{\ph}{\varphi}
\newcommand{\Si}{\Sigma}
\begin{document}

\markboth{Paul Gustafson}
{Finiteness of Mapping Class Group Representations from Twisted Dijkgraaf-Witten Theory}


\title{Finiteness of Mapping Class Group Representations from Twisted Dijkgraaf-Witten Theory}

\author{Paul P. Gustafson}
\email{pgustafs@math.tamu.edu}
\address{Department of Mathematics,
    Texas A\&M University,
    College Station, TX
    U.S.A.}

\maketitle

\begin{abstract}
We show that any twisted Dijkgraaf-Witten representation of a mapping class group of an orientable, compact surface with boundary has finite image. This generalizes work of Etingof, Rowell and Witherspoon showing that the braid group images are finite \cite{erw}.  In particular, our result answers their question regarding finiteness of images of arbitrary mapping class group representations in the affirmative.

Our approach is to translate the problem into manipulation of colored graphs embedded in the given surface. To do this translation, we use the fact that any twisted Dijkgraaf-Witten representation associated to a finite group $G$ and 3-cocycle $\omega$ is isomorphic to a Turaev-Viro-Barrett-Westbury (TVBW) representation associated to the spherical fusion category $\Vect_G^\omega$ of twisted $G$-graded vector spaces. As shown by Kirillov, the representation space for this TVBW representation is canonically isomorphic to a vector space spanned by $\Vect_G^\omega$-colored graphs embedded in the surface \cite{kirillovStringNets}.   By analyzing the action of the Birman generators \cite{birman} on a finite spanning set of colored graphs, we find that the mapping class group acts by permutations on a slightly larger finite spanning set.  This implies that the representation has finite image.
\end{abstract}



\section{Introduction}
Given a spherical fusion category $\mathcal A$ over a field $k$ and an oriented compact surface $\Si$, possibly with boundary, the Turaev-Viro-Barrett-Westbury (TVBW) construction gives a projective representation of the mapping class group $\MCG(\Si)$ \cite{hep-th/9311155, TURAEV1992865}.  A natural problem is to determine the images of such representations.  In particular, we would like to know when such a representation has finite image.

It is conjectured that any TVBW mapping class group representation associated to a spherical fusion category $\mathcal A$ has finite image if and only if  $\mathcal A$ is weakly integral.  This conjecture is a modification of the Property F conjecture \cite{erw, nr}, which states that braid group representations coming from a braided monoidal category $\mathcal C$ should have finite image if and only if $\mathcal C$ is weakly integral. Instead of only considering braid group representations, one can consider mapping class groups of arbitrary orientable surfaces.  In this case, the input categories to construct the representations must be more specialized than just braided monoidal.  One can either apply the Reshitikhin-Turaev construction to a modular tensor category, or apply the TVBW construction to a spherical fusion category.  The former is more general than the latter since the Reshitikhin-Turaev construction for the Drinfeld center $Z(\mathcal A)$ of a spherical fusion category $\mathcal A$ yields the same representation as the TVBW construction for $\mathcal A$.  However, for the case considered in this paper, the simpler TVBW construction suffices.

In this paper, our input category is  $\mathcal A = \Vect_G^\omega$, the spherical fusion category of $G$-graded vector spaces with associativity modified by a cocycle $\omega \in Z^3(G, k^\times)$.  In this case, the TVBW construction corresponds to the twisted Dijkgraaf-Witten theory of \cite{dijkgraaf1990}.  The category $\Vect_G^\omega$ is integral, so the one expects its associated mapping class group representations to have finite image.  The main contribution of this paper is to verify this for arbitrary $G$ and $\omega$.

\textbf{Acknowledgments.}  This paper would not have been written without the guidance of my advisor, Eric Rowell.  I am also grateful to Zhenghan Wang and my father Robert Gustafson for their advice.

\section{Related Work}

The closest related work is a result of Etingof, Rowell, and Witherspoon who showed purely algebraically that the braid group representations associated to the modular category $\Mod(D^\omega(G))$ have finite images \cite{erw}.   The braid group $B_n$ is the mapping class group of a disk with $n$ marked points relative to its boundary, so they asked whether their result generalizes to arbitrary mapping class group representations associated to $\Mod(D^\omega(G))$. This paper answers their question affirmatively, using a different, more geometric approach.

Prior to the current work, certain specific cases had already been solved. In the case of the torus, Ng and Schauenburg's Congruence Subgroup Theorem implies the much stronger result that any Reshitikhin-Turaev representation of the mapping class group of the torus has finite image \cite{0806.2493}.   Another related result is due to Fjelstad and Fuchs \cite{fjfu}.  They showed that, given a surface with at most one boundary component, the mapping class group representations corresponding to the untwisted (i.e. $\omega = 1$) Dijkgraaf-Witten theory have finite image.  Their paper uses an algebraic method of Lyubashenko \cite{Lyubashenko1996} that gives a projective mapping class group representation to any factorizable ribbon Hopf algebra, in their case, the double $D(G)$. In our case, we instead consider the mapping class group action on a vector space of $\Vect_G^\omega$-colored embedded graphs defined by Kirillov \cite{kirillovStringNets}, yielding a simpler, geometric proof of the more general twisted case.

Bantay also calculated the images of certain representations of mapping class groups on the Hilbert space of an orbifold model associated to $D^\omega(G)$ \cite{bantay}.  These representations appear to coincide with the twisted Dijkgraaf-Witten representations. However, due to lack of proof, the precise connection is unclear.

\section{Background}
\subsection{The spherical fusion category $\Vect_G^\omega$}
The following definitions are well-known and can be found in, e.g., \cite{etingofTensor}.  Let $k$ be an  algebraically closed field of characteristic 0,  $G$ a finite group, and $\omega \in Z^3(G, k^\times)$ a 3-cocycle.    The spherical fusion category of $G$-graded $k$-vector spaces with associativity defined by $\omega$ is denoted $\Vect_G^\omega$.  The objects of this category are vector spaces with a decomposition $V = \bigoplus_{g \in G} V_g$. Morphisms are linear maps preserving the grading. The tensor product is defined by
$$ (V \otimes W)_g = \bigoplus_{x,y \in G, xy = g} V_x \otimes W_y. $$

For each $g \in G$, pick a 1-dimensional vector space $\delta_g \in \Obj(\Vect_G^\omega)$ concentrated in degree $g$.  The set $\{\delta_g : g \in G\}$ is a complete set of pairwise non-isomorphic representatives for the isomorphism classes of simple objects of $\Vect_G^\omega$.  We will sometimes abuse notation by referring to an object $\delta_g$ by the group element $g$.  We have $\one \cong \delta_1$, and  $\delta_g^* := \delta_{g^{-1}}$ with the coevaluation and evaluation maps defined below.

For the structural morphisms,  we follow \cite{math/0601012}.  We will treat the canonical isomorphisms $\delta_g \otimes \delta_h \cong \delta_{gh}$ as identities.    The associator $\alpha_{g,h,k}:(\delta_g \otimes \delta_h) \otimes \delta_k \to \delta_g \otimes (\delta_h \otimes \delta_k)$ is defined by
$$\alpha_{g,h,k} = \omega(g,h,k) \id_{ghk}.$$ 
The evaluator $\ev_g:\delta_g^* \otimes \delta_g \to 1$ is 
$$\ev_{g} = \omega(g^{-1},g,g^{-1}) \id_1.$$  
The coevaluator $\coev_g:1 \to \delta_g \otimes \delta_g^*$ is 
$$\coev_{g} = \id_1.$$ 
The pivotal structure $j_g:\delta_g \to \delta_g^{**}$ is 
$$j_{g} = \omega(g^{-1},g,g^{-1}) \id_{g}.$$

If $\omega$ and $\omega'$ are cohomologous cocycles, then $\Vect_G^\omega$ is monoidally equivalent to $\Vect_G^{\omega'}$ \cite{etingofTensor}.  This equivalence respects the pivotal structure, so extends to an equivalence of spherical categories.  It is a basic result in group cohomology that any cocycle $\omega \in Z^3(G, k^\times)$ is cohomologous to a cocycle taking values in $\mu_{|G|}$, the roots of unity of order $|G|$.  Thus, by replacing $\Vect^\omega_G$ with an equivalent spherical category, we assume  without loss of generality that $\Img(\omega) \subset \mu_{|G|}$ (as in \cite{erw}).  

\newcommand{\ee}{\mathbf{e}}       
\newcommand{\A}{\mathcal{A}}      
\newcommand{\st}{\; | \;}                               
\newcommand{\ttt}{\otimes}                              
\newcommand{\cc}[1]{\underset{\scriptstyle #1}{\circ}}
\newcommand{\ccc}[1]{\underset{\scriptstyle #1}{\bullet}}
\newcommand{\ti}{\tilde}
\newcommand{\ov}{\overline}
\newcommand{\del}{\partial}
\newcommand{\<}{\langle}
\renewcommand{\>}{\rangle}
\newcommand{\surjto}{\twoheadrightarrow}      
\newcommand{\injto}{\hookrightarrow}          
\newcommand{\isoto}{\xrightarrow{\sim}}       
\newcommand{\xxto}{\xrightarrow}              
\newcommand{\firef}[1]{Figure~{\rm\ref{#1}}}
\newcommand{\R}{\mathbb{R}}       

\subsection{Colored graphs}\label{s:colored}

The following definitions and theorem are due to Kirillov
\cite{kirillovStringNets}.  An unstrict version is recorded here for convenience (see \cite{ns} for definitions of the strictification monoidal functors and their quasi-inverses).  For any spherical fusion category $\mathcal A$ and surface $\Si$, Kirillov gives the following presentation of the Levin-Wen model as a vector space of
colored graphs modulo local relations.  He also proves that this space
is canonically isomorphic to the TVBW vector space associated to
$\Si$.  It is straightforward to check that this isomorphism, which
amounts to replacing a triangulation with its dual graph, commutes
with the mapping class group action.

We use the convention that a tensor product of multiple objects with parentheses
omitted correspond to the left-associative parenthesization.

We define the functor $\A^{\boxtimes n}\to \Vect$ by
\begin{equation}\label{e:vev}
\<V_1,\dots,V_n\>=\Hom_\A(\one,
V_1\otimes\dots\otimes V_n)
\end{equation}
for any collection $V_1,\dots, V_n$ of objects of $\A$. Note that pivotal
structure gives functorial isomorphisms
\begin{equation}\label{e:cyclic}
z\colon\<V_1,\dots,V_n\>\cong \<V_n, V_1,\dots,V_{n-1}\>
\end{equation}
where, up to associators and unitors,
$$z(\phi)= (j_{\lstar \lstar V_n} \otimes \id_{V_1 \otimes \cdots \otimes V_{n-1}} \otimes \ev_{V_n} ) \circ (\id_{\lstar \lstar V_n} \otimes \phi \otimes \id_{\lstar V_n} ) \circ  \coev_{\lstar \lstar V_n}  $$ 
and $z^n=\id$ (see \cite[Section 5.3]{BK}). Thus, up to a canonical
isomorphism, the space $\<V_1,\dots,V_n\>$ only depends on the cyclic order
of $V_1,\dots, V_n$.

We have a natural composition map 
\begin{equation}\label{e:composition}
\begin{aligned}
 \<V_1,\dots,V_n, X\> \otimes \<X^*, W_1,\dots,
W_m\>&\to\<V_1,\dots,V_n, W_1,\dots, W_m\> \\
\ph\otimes\psi & \mapsto \ph\cc{X}\psi= (\id_{V_1 \otimes \cdots \otimes V_{n}} \otimes (\ev_{X^*} \circ (j_X \otimes \id_{X^*}  )\otimes \id_{W_1 \otimes \cdots \otimes W_{m}} )\circ (\ph\otimes\psi),
\end{aligned}
\end{equation}
up to associators and unitors.  

We will consider finite directed graphs embedded in an oriented surface $\Si$
(which may have boundary); for such a
graph $\Ga$, let $E(\Ga)$ be the set of edges.

If $\Si$ has a boundary, the graph is allowed to have uncolored 
one-valent vertices on $\del \Si$ with exactly one incoming edge but no other common points with $\del \Si$; all other  vertices will  be called interior.  We will  call the edges of $\Ga$ terminating in these  one-valent vertices {\em legs}.   
\begin{defn}\label{d:coloring} Let $\Si$ be an oriented surface
(possibly with boundary) and $\Ga\subset \Si$ --- an embedded graph as
defined above.  A {\em coloring} of $\Ga$ is the
following data:

  \begin{itemize}
    \item Choice of an object $V(\ee)\in \Obj \A$ for every oriented edge $\ee\in E(\Ga)$ 
    \item Choice of a vector $\ph(v)\in \<V(\ee_1)^{\epsilon_1},\dots,V(\ee_n)^{\epsilon_n}\>$ 
      (see \eqref{e:vev})  for    every interior vertex $v$, where 
      $\ee_1, \dots, \ee_n$ are edges incident to $v$ taken in counterclockwise 
      order. The object $V(\ee_i)^{\epsilon_i} = V(\ee_i)$ if $\ee_i$ is outgoing from $v$ and $V(\ee_i)^*$ if $\ee_i$ is incoming. 
\end{itemize}

  An {\em isomorphism} $f$ of two colorings $\{V(\ee), \ph(v)\}$, $\{V'(\ee), \ph'(v)\}$ is a collection of isomorphisms $f_\ee \colon V(\ee)\cong V'(\ee)$  for which  $\ph'(v)=f\circ\ph(v)$, where the action of $f$ on $V(\ee)^*$ is by $(f_\ee^{-1})^*$.

We will denote the set of all colored graphs on a surface $\Si$ by
$\Gr(\Si)$.
\end{defn}


Note that if $\Si$ has a boundary, then every colored graph $\Ga$ defines
a collection of points $B=\{b_1,\dots, b_n\}\subset \del \Si$ (the
endpoints of the legs of $\Ga$)  and  a collection of objects $V_b\in \Obj\
\A$ for every $b \in B$: the colors of the legs of $\Ga$. We will the pair $(B, \{V_b\})$ by
$\VV=\Ga\cap \del\Si$ and call it a {\em boundary value}. We will denote  
$$
\Gr(\Si, \VV)=\text{set of all colored graphs in $\Si$ with boundary value
} \VV.
$$

We can also consider formal linear combinations of colored graphs. Namely,
for fixed boundary value $\VV$ as above, we will denote 
\begin{equation*}\label{e:vgr}
\VGr(\Si,\VV)=\{\text{formal linear combinations of graphs }\Ga\in
\Gr(\Si,\VV)\}
\end{equation*}
In particular, if $\del \Si=\varnothing$, then the only possible boundary
condition is trivial ($B=\varnothing$); in this case, we will just write
$\VGr(\Si)$.

The following theorem is a variation of result of Reshitikhin and Turaev. 
\begin{thm}\label{t:RT}
  There is a unique  way to assign to every colored
  planar graph $\Ga$ in a disk $D\subset \R^2$ a vector
  \begin{equation*}
    \<\Ga\>_D\in \langle V(\ee_1)^{\epsilon_i}, \dots, V(\ee_n)^{\epsilon_n} \rangle
  \end{equation*}
  where $\ee_1,\dots, \ee_n$ are the edges of $\Ga$ meeting the boundary
  of $D$ (legs), taken in counterclockwise order and $\epsilon_i$ are the corresponding orientations,
  so that that following conditions are satisfied:
  \begin{enumerate}
     \item $\<\Ga\>$ only depends on the isotopy  class of $\Ga$.

    \item If $\Ga$ is a single vertex colored by
          $\ph\in \langle V(\ee_1), \dots,  V(\ee_n)\rangle$, then $\<\Ga\>=\ph$.
     
    \item Local relations shown in \firef{f:local_rels1} hold.

\begin{figure}[ht]
$$
\begin{tikzpicture}
\node[morphism] (ph) at (0,0) {$\psi$};
\node[morphism] (psi) at (1,0) {$\ph$};
\node at (-0.7,0.1) {$\vdots$};
\node at (1.7,0.1) {$\vdots$};
\draw[->] (ph)-- +(220:1cm) node[pos=1.0,below,scale=0.8]
{$V_n$};
\draw[->] (ph)-- +(140:1cm) node[pos=1.0,above,scale=0.8]
{$V_1$};
\draw[->] (psi)-- +(40:1cm) node[pos=1.0,above,scale=0.8]
{$W_m$};
\draw[->] (psi)-- +(-40:1cm) node[pos=1.0,below,scale=0.8]
{$W_1$};
\draw[->] (ph) -- (psi) node[pos=0.5,above,scale=0.8] {$X$};
\end{tikzpicture}
=
\begin{tikzpicture}
\node[ellipse, thin, scale=0.8, inner sep=1pt, draw] (ph) at (0,0)
             {$\psi\cc{X}\ph$};
\node at (-0.8,0.1) {$\vdots$};
\node at (0.8,0.1) {$\vdots$};
\draw[->] (ph)-- +(220:1cm) node[pos=1.0,below,scale=0.8] {$V_n$};
\draw[->] (ph)-- +(140:1cm) node[pos=1.0,above,scale=0.8] {$V_1$};
\draw[->] (ph)-- +(40:1cm) node[pos=1.0,above,scale=0.8]  {$W_m$};
\draw[->] (ph)-- +(-40:1cm) node[pos=1.0,below,scale=0.8] {$W_1$};
\end{tikzpicture}
$$
\\
$$
\begin{tikzpicture}
\node[dotnode] (ph) at (0,0) {};
\node[dotnode] (psi) at (1.5,0) {};
\node at (-0.7,0.1) {$\vdots$};
\node at (2.2,0.1) {$\vdots$};
\draw[->] (ph)-- +(220:1cm) node[pos=1.0,below,scale=0.8] {$A_n$};
\draw[->] (ph)-- +(140:1cm) node[pos=1.0,above,scale=0.8] {$A_1$};
\draw[->] (psi)-- +(40:1cm) node[pos=1.0,above,scale=0.8] {$B_m$};
\draw[->] (psi)-- +(-40:1cm) node[pos=1.0,below,scale=0.8] {$B_1$};
\draw[out=45,in=135, midarrow] (ph) to (psi)
                node[above,xshift=-0.6cm, yshift=0.25cm, scale=0.8] {$V_k$};
\draw[ out=15,in=165, midarrow] (ph) to (psi);
\draw[ out=-15,in=195, midarrow] (ph) to (psi);
\draw[ out=-45,in=225, midarrow] (ph) to (psi) node[below, xshift=-0.6cm, yshift=-0.3cm, scale=0.8] {$V_1$};
\end{tikzpicture}
=
\begin{tikzpicture}
\node[dotnode] (ph) at (0,0) {};
\node[dotnode] (psi) at (1.5,0) {};
\node at (-0.7,0.1) {$\vdots$};
\node at (2.2,0.1) {$\vdots$};
\draw[->] (ph)-- +(220:1cm) node[pos=1.0,below,scale=0.8] {$A_n$};
\draw[->] (ph)-- +(140:1cm) node[pos=1.0,above,scale=0.8] {$A_1$};
\draw[->] (psi)-- +(40:1cm) node[pos=1.0,above,scale=0.8] {$B_m$};
\draw[->] (psi)-- +(-40:1cm) node[pos=1.0,below,scale=0.8] {$B_1$};
\draw[ ->] (ph) to (psi)
            node[above,xshift=-0.8cm,scale=0.8] {$V_1\otimes \dots\otimes V_k$};
\end{tikzpicture}
\qquad k\ge 0
$$
\\
$$
\begin{tikzpicture}
\node[ellipse, scale=0.8, inner sep=1pt, draw,thin] (ph) at (0,0)
{$\coev_{\lstar V}$};
\draw[->] (ph)-- +(180:1cm) node[pos=1.0,above,scale=0.8] {$\lstar V$};
\draw[->] (ph)-- +(0:1cm) node[pos=1.0,above,scale=0.8] {$V$};
\end{tikzpicture}
=
\begin{tikzpicture}
\draw[->] (0,0)-- (2,0) node[pos=0.5,above,scale=0.8] {$V$};
\end{tikzpicture}
$$
\caption{Local relations for colored graphs with associators and unitors suppressed.        }\label{f:local_rels1}

\end{figure}
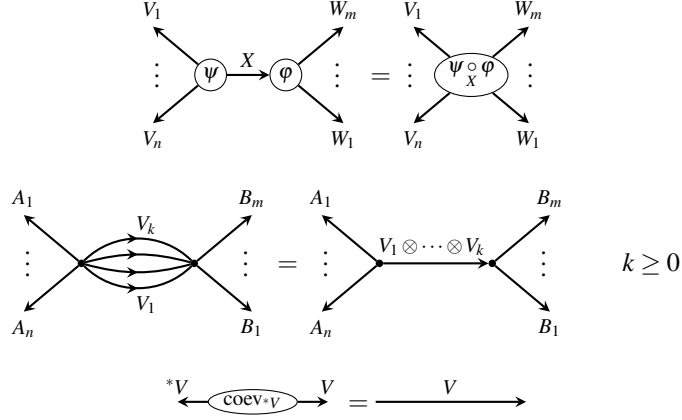

    Local relations should be understood as follows: for any pair 
    $\Ga, \Ga'$ of colored graphs which are identical  outside a subdisk 
	$D'\subset D$, and in this disk are homeomorphic to the graphs
    shown in  \firef{f:local_rels1},  we must have $\<\Ga\>=\<\Ga'\>$. 
   \end{enumerate}

    Moreover, so defined $\<\Ga\>$ satisfies the following properties:
    \begin{enumerate} 
    \item $\<\Ga\>$ is linear in color of each vertex $v$ \textup{(}for 
         fixed colors of edges and other vertices\textup{)}.
    \item $\<\Ga\>$ is additive in colors of edges as shown in 
          \firef{f:linearity}.

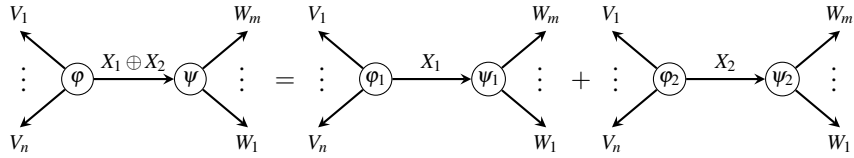
\begin{figure}[ht]
$$
\begin{tikzpicture}
\node[morphism] (ph) at (0,0) {$\ph$};
\node[morphism] (psi) at (1.5,0) {$\psi$};
\node at (-0.7,0.1) {$\vdots$};
\node at (2.2,0.1) {$\vdots$};
\draw[->] (ph)-- +(220:1cm) node[pos=1.0,below,scale=0.8] {$V_n$};
\draw[->] (ph)-- +(140:1cm) node[pos=1.0,above,scale=0.8] {$V_1$};
\draw[->] (psi)-- +(40:1cm) node[pos=1.0,above,scale=0.8] {$W_m$};
\draw[->] (psi)-- +(-40:1cm) node[pos=1.0,below,scale=0.8] {$W_1$};
\draw[->] (ph) -- (psi) node[pos=0.5,above,scale=0.8] {$X_1\oplus X_2$};
\end{tikzpicture}
=
\begin{tikzpicture}
\node[morphism] (ph) at (0,0) {$\ph_1$};
\node[morphism] (psi) at (1.5,0) {$\psi_1$};
\node at (-0.7,0.1) {$\vdots$};
\node at (2.2,0.1) {$\vdots$};
\draw[->] (ph)-- +(220:1cm) node[pos=1.0,below,scale=0.8]{$V_n$};
\draw[->] (ph)-- +(140:1cm) node[pos=1.0,above,scale=0.8]{$V_1$};
\draw[->] (psi)-- +(40:1cm) node[pos=1.0,above,scale=0.8]{$W_m$};
\draw[->] (psi)-- +(-40:1cm) node[pos=1.0,below,scale=0.8]{$W_1$};
\draw[->] (ph) -- (psi) node[pos=0.5,above,scale=0.8] {$X_1$};
\end{tikzpicture}
+
\begin{tikzpicture}
\node[morphism] (ph) at (0,0) {$\ph_2$};
\node[morphism] (psi) at (1.5,0) {$\psi_2$};
\node at (-0.7,0.1) {$\vdots$};
\node at (2.2,0.1) {$\vdots$};
\draw[->] (ph)-- +(220:1cm) node[pos=1.0,below,scale=0.8]{$V_n$};
\draw[->] (ph)-- +(140:1cm) node[pos=1.0,above,scale=0.8]{$V_1$};
\draw[->] (psi)-- +(40:1cm) node[pos=1.0,above,scale=0.8]{$W_m$};
\draw[->] (psi)-- +(-40:1cm) node[pos=1.0,below,scale=0.8]{$W_1$};
\draw[->] (ph) -- (psi) node[pos=0.5,above,scale=0.8] {$X_2$};
\end{tikzpicture}
$$
\caption{Linearity of $\<\Ga\>$. Here $\ph_1,\ph_2$ are compositions
of $\ph$ with projector $X_1\oplus X_2\to X_1$ (respectively, 
$X_1\oplus X_2\to X_2$), and similarly for $\psi_1,\psi_2$.
}\label{f:linearity}
\end{figure}
    \item If $\Ga,\Ga'$ are two isomorphic colorings of the same graph,
      then $\<\Ga\>=\<\Ga'\>$. 
    
    \item Composition property: if $D'\subset D$ is a subdisk such
      that $\del D'$ does not contain vertices of $\Ga$ and meets edges of
      $\Ga$ transversally, then   $\<\Ga\>_D$ will not change if we replace
      subgraph $\Ga\cap D'$ by a single vertex colored by
      $\<\Ga\cap D'\>_{D'}$.

  \end{enumerate}
The vector $\<\Ga\>$ is called the {\em evaluation} of $\Ga$.
\end{thm}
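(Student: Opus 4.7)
My plan is to handle uniqueness first, then construct the evaluation explicitly, then address well-definedness (where the real work lies), and finally derive properties (1)--(4).

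\emph{Uniqueness.} I would argue that conditions (1)--(3) together with the single-vertex normalization determine $\<\Ga\>$ for every colored graph. Given $\Ga$, the first local relation in \firef{f:local_rels1} lets me fuse any two vertices joined by an edge into a single vertex colored by their composition, the second lets me collapse any bundle of parallel edges between two vertices into one edge colored by the tensor product, and the third lets me erase any internal $\coev$-loop. Iterating these moves, together with isotopies provided by (1), reduces any connected piece of $\Ga$ to a single vertex; distinct components can be first joined by a $\coev_{\one}$--$\ev_{\one}$ pair and then simplified. At that stage condition (2) fixes the value. Hence at most one assignment can satisfy (1)--(3).

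\emph{Existence.} I would follow the standard Reshetikhin--Turaev recipe adapted to the pivotal setting. Isotope $\Ga$ so that the vertical projection is a Morse function in which every interior vertex, every critical point of every edge, and every boundary leg sits at a distinct height; then slice $D$ by generic horizontal lines so that each strip contains a single elementary piece---a colored vertex, a $\coev$ cup, an $\ev$ cap, or a permutation of strands. Assign to each strip the corresponding morphism in $\A$ (using $j$ to identify $V^{**}$ with $V$ where an edge bends back), compose the resulting morphisms with associators and unitors inserted for any fixed parenthesization at each slice, and finally apply the cyclic isomorphism $z$ from \eqref{e:cyclic} to place the boundary legs in counterclockwise order. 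This defines a candidate $\<\Ga\>_D$.

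\emph{Well-definedness, and the main obstacle.} The serious step is verifying that $\<\Ga\>_D$ depends only on the isotopy class of $\Ga$, and not on the Morse function, the horizontal slicing, or the parenthesizations chosen at each slice. This reduces to a finite list of local moves: swapping the heights of two independent elementary pieces (naturality of tensor product), the zigzag identity canceling a $\coev$--$\ev$ pair (triangle identity, together with $j$ to pass between left and right duals), and the pentagon/triangle axioms for reparenthesization. Sphericality of $\A$ ensures that the two ways of closing a strand around a morphism agree, so left and right traces can be conflated; this is what makes the construction well-defined at vertices whose edges point in both directions. This is the standard graphical calculus for spherical categories, and is the only genuinely technical ingredient. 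Once invariance is secured, relations (1)--(3) hold by inspection because each is literally an equality among the elementary pieces.

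\emph{Further properties.} Properties (1)--(4) in the second list then follow formally. Linearity in each vertex color is immediate since that color enters only via a linear composition. Additivity in an edge color $X=X_1\oplus X_2$ is obtained by inserting $\id_X = \iota_1\pi_1 + \iota_2\pi_2$ along that edge and using bilinearity of composition and tensor, as in \firef{f:linearity}. Invariance under isomorphism of colorings is functoriality of the construction. Finally, the composition property for a subdisk $D'\subset D$ follows because the slicing recipe is compatible with evaluating $\Ga\cap D'$ separately and reinserting the result as a single coupon; this is built into the construction. Thus the whole theorem reduces, up to routine checks, to the well-definedness step above.
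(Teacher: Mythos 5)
This theorem is not proved in the paper at all: it is background quoted from Kirillov's string-net paper and attributed to Reshetikhin--Turaev, so there is no in-paper argument to compare yours against. Your sketch follows the standard route that the cited literature takes (uniqueness by using the three local relations plus isotopy to collapse any colored graph to a single vertex, whose value is then pinned down by the normalization; existence by slicing a generic Morse position into elementary pieces --- coupons, cups, caps, crossings-free braids --- and composing the associated morphisms; well-definedness by checking invariance under the finite list of moves relating different slicings), and in outline it is sound. Two caveats: first, the well-definedness step you isolate is indeed the entire technical content of the theorem, and your proposal defers it to ``the standard graphical calculus'' rather than carrying it out, so as written this is a reduction to the literature rather than a self-contained proof --- which is acceptable here, since that is exactly the status the paper gives the result. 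Second, a small misattribution: planar isotopy invariance in a disk, including the handling of vertices with edges of mixed orientation, is furnished by the pivotal structure (the cyclic isomorphisms $z$ of the paper's Equation~(2)); sphericality (equality of left and right traces) is not needed for graphs in a disk and only enters when one passes to closed surfaces or to the TVBW invariant itself. Also make sure your uniqueness reduction explicitly deals with loops and with edges meeting no vertex (insert a $\coev$-labeled vertex via the third relation before contracting), which your sketch gestures at but does not spell out.
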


To define local relations between embedded graphs, Kirillov defines the space of null graphs as follows. Let
$\Ga=c_1\Ga_1+\dots+c_n\Ga_n$ be a formal linear
combination of colored graphs in $\Si$.  If there exists an embedded disk $D \subset M$ such that
\begin{enumerate}
  \item $\Ga$ is transversal to $\del D$ (i.e., no vertices of $\Ga_i$ 
      are on the boundary of $D$ and edges of each $\Ga_i$ meet 
      $\del D$ transversally),
  \item all $\Ga_i$ coincide outside of $D$,
  \item and $\<\Ga\>_D=\sum c_i\<\Ga_i\cap D\>_D=0$;
\end{enumerate}
then $\Ga$ is called a null graph.

\begin{defn}
The vector space $H := \Hs(\Si, \VV)$ associated to a oriented surface $\Si$ with boundary condition $\VV$ by the spherical fusion category $\mathcal A$ is the quotient space
 $$
   \Hs(\Si, \VV)=\VGr(\Si, \VV)/N(\Si, \VV)
  $$
  where $N(\Si, \VV)$ is  the subspace spanned by null graphs 
  (for all possible embedded disks  $D \subset \Si$). 
\end{defn}

\section{Results}

\newcommand{\B}{\mathcal B}



To show that the image of any $\Vect_G^\omega$ mapping class group representation is finite, we will analyze the action of the mapping class group on a finite collection of colored graphs that span the representation space $H$.  To define this spanning set, we will need the following definitions of simple morphisms and simple colored graphs.

\begin{defn} \label{def:simple_morphism}
Let $g_1, \ldots, g_n \in G$.  A morphism $\phi \in \langle g_1, \ldots, g_n \rangle$ will be called \emph{simple} if it is the composition of the isomorphism $\one \cong \delta_1$ and  tensor product isomorphisms of the form $\delta_{gh} \cong \delta_g \otimes \delta_h$.
\end{defn}

By MacLane's coherence theorem, there is a unique simple morphism in $\langle g_1, \ldots, g_n \rangle$ whenever $\prod_{i=1}^n g_i = 1$. This simple morphism is a canonical basis element for the 1-dimensional space $\langle g_1, \ldots, g_n \rangle$.  We will describe a map between such spaces as multiplication by a scalar, where the scalar is the matrix coefficient of the map with respect to these canonical bases.

\begin{defn} \label{def:simple_graph}
Let $\Gamma$ be a graph embedded in a surface $\Si$.  A $\Vect_G^\omega$ coloring $(V, \phi)$ of $\Gamma$ will be called \emph{simple} if the following conditions both hold: 
\begin{enumerate}
\item For every oriented edge $\ee \in E(\Gamma)$, there exists a group element $g(\ee) \in G$ such that
the coloring $V(e_i) = \delta_{g(\ee)}$.
\item If $v$ is an interior vertex of $\Gamma$, then there exists an enumeration  $\ee_1, \dots, \ee_n$ of the edges incident to $v$, taken in counterclockwise order, such that $\prod_{i=1}^n g(\ee_i)^{\epsilon_i} = 1$ and  the vertex label $\phi(v) \in \langle g(\ee_1)^{\epsilon_i}, \ldots, g(\ee_n)^{\epsilon_i} \rangle$ is a simple morphism, where $\epsilon_i = 1$ if $\ee_i$ is outgoing from $v$ and $-1$ if $\ee_i$ is incoming.
\end{enumerate}
\end{defn}

\begin{lem} \label{lem:z_simple}
Let $\phi \in \langle g_1, \ldots, g_n \rangle$ and $\psi \in \langle g_n, g_1, \ldots, g_{n-1} \rangle$ be simple morphisms.  Then $z(\phi) = \alpha \psi$, where $\alpha \in \mu_{|G|}$.
\end{lem}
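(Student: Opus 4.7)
The plan is to exploit that $\phi$ and $\psi$ are canonical basis elements of 1-dimensional hom spaces, so $z(\phi) = \alpha\psi$ with $\alpha \in k^\times$, and then to show $\alpha$ is a finite product of values of $\omega$.

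First, I would note that $\langle g_1, \dots, g_n\rangle = \Hom(\one, \delta_{g_1} \otimes \cdots \otimes \delta_{g_n})$ is 1-dimensional precisely when $g_1 \cdots g_n = 1$, and is zero otherwise; this condition holds because $\phi$ is a nonzero simple morphism in this space. Cyclically, $g_n g_1 \cdots g_{n-1} = g_n \cdot g_n^{-1} = 1$ as well, so $\langle g_n, g_1, \dots, g_{n-1}\rangle$ is also 1-dimensional and spanned by $\psi$. Since $z$ is a functorial isomorphism between these two 1-dimensional spaces, $z(\phi) = \alpha\psi$ for some nonzero $\alpha$.

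The key observation is that in $\Vect_G^\omega$, every structural morphism of the spherical fusion category---associator, unitor, evaluator, coevaluator, and pivotal---acts between the simple objects $\delta_g$ (and their tensor products) as multiplication by a scalar lying in $\mu_{|G|}$. Explicitly, the associator contributes $\omega(g,h,k)$, the evaluator $\omega(g^{-1}, g, g^{-1})$, the coevaluator $1$, and the pivotal $\omega(g^{-1}, g, g^{-1})$; the unitors may be arranged to be identities by normalizing $\omega$, which does not affect the standing assumption $\Img(\omega) \subset \mu_{|G|}$. Moreover, the simple morphism $\phi$ is by definition a composition of the canonical isomorphism $\one \cong \delta_1$ with tensor-product isomorphisms $\delta_{gh} \cong \delta_g \otimes \delta_h$, which are themselves treated as identities in the paper's convention.

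Substituting $\phi$ into the formula $z(\phi) = (j_{\lstar\lstar V_n} \otimes \id \otimes \ev_{V_n}) \circ (\id \otimes \phi \otimes \id) \circ \coev_{\lstar\lstar V_n}$, together with the implicit associators and unitors needed to make the composition well-defined, expresses $z(\phi)$ as a product of $\omega$-values times the canonical simple morphism $\psi$. Hence $\alpha$ is a finite product of values of $\omega$ and their inverses, and since $\mu_{|G|}$ is a multiplicative subgroup of $k^\times$, we conclude $\alpha \in \mu_{|G|}$. The main technical obstacle is the bookkeeping of the implicit associators and unitors in this composition; however, since each such insertion contributes a factor in $\mu_{|G|}$, no explicit formula for $\alpha$ is needed.
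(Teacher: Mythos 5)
Your proposal is correct and follows essentially the same route as the paper's own proof: both observe that the formula for $z$ involves only structural morphisms of $\Vect_G^\omega$, each of which acts on the relevant $\delta_g$-labelled spaces as multiplication by a value of $\omega$, so the resulting scalar lies in $\mu_{|G|}$. The extra remarks you add (1-dimensionality of the hom spaces, explicit scalar values, unitor normalization) are harmless refinements of the same argument.
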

\begin{proof}
The definition of the $z$-morphism in Equation \ref{e:cyclic} only involves tensors and compositions of structural morphisms -- i.e. associators, unitors, the pivotal $j$-morphism, evaluation, and coevaluation morphisms.  Since all the tensor factors in the codomain of $\phi$ are of the form $\delta_g$, the definition of $\Vect_G^\omega$ implies that each of the structural morphisms simply consist of multiplication by elements of the form $\omega(g,h,k)$ for some $g,h,k \in G$. Thus, $z(\phi) = \alpha \psi$ for some $\alpha$ which is a product of elements in $\Img(\omega)$.   Since $\Img(\omega) \subset \mu_{|G|}$, it follows that $\alpha \in \mu_{|G|}$.
\end{proof}

\begin{prop} \label{prop:omega}
Let $\Gamma$ be a simple colored graph embedded in a surface $\Sigma$.  Let $\Delta$ be the colored graph given by applying one of the three local moves in Figure \ref{f:local_rels1} to $\Gamma$.  Then
\begin{enumerate}
\item  each edge of $\Delta$ is labeled by $\delta_g$ for some $g \in G$, and
\item  there exists $\alpha \in \mu_{|G|}$ such that 
$$\Delta - \alpha \Delta' \in N(\Sigma, \VV),$$
where $\Delta'$ is a simple colored graph given by replacing each vertex label in $\Delta$ with a simple morphism.
\end{enumerate}
\end{prop}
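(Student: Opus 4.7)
The plan is to verify conditions (1) and (2) for each of the three local moves of Figure \ref{f:local_rels1} in turn. Throughout, I will use the key fact that in $\Vect_G^\omega$ every structural morphism acts on simple objects by multiplication by an element of $\Img(\omega) \subset \mu_{|G|}$: the associator $\alpha_{g,h,k} = \omega(g,h,k)\id_{ghk}$, the pivotal $j_g = \omega(g^{-1},g,g^{-1})\id_g$, the evaluator $\ev_g = \omega(g^{-1},g,g^{-1})\id_1$, and the coevaluator $\coev_g = \id_1$. Consequently, any morphism obtained by composing and tensoring simple morphisms with such structural maps again lies in $\mu_{|G|}$ times the unique simple basis vector of its one-dimensional target Hom-space, which is exactly the scalar behavior we want to track.

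For the first move (composition through an edge $X = \delta_g$), the new vertex label $\phi\cc{X}\psi$ is built from \eqref{e:composition} as a composition of $\phi\otimes\psi$ with $j_X$, $\ev_{X^*}$, and coherence isomorphisms, each acting as multiplication by an element of $\mu_{|G|}$. Hence $\phi\cc{X}\psi = \alpha_1\phi'$ with $\alpha_1 \in \mu_{|G|}$ and $\phi'$ the unique simple morphism in $\langle V_1,\dots,V_n,W_1,\dots,W_m\rangle$, verifying both (1) and (2) at the merged vertex. For the second move (fusing $k$ parallel edges into their tensor product), the convention of Section 3.1 identifies $\delta_{g_1}\otimes\cdots\otimes\delta_{g_k}$ with $\delta_{g_1\cdots g_k}$ under the left-associative bracketing, verifying (1); re-bracketing the two adjacent vertex labels so that they regard this single factor in place of the $k$ parallel ones introduces only products of associators, hence scalars in $\mu_{|G|}$. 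The third move (removing a coevaluation cap) is essentially trivial, because $\coev_{^*V} = \id_1$ in $\Vect_G^\omega$ and the reoriented leg becomes $V = \delta_g$ via $\delta_g^* = \delta_{g^{-1}}$; the scalar at that vertex may be taken to be $1$.

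Letting $\alpha \in \mu_{|G|}$ be the product of the per-vertex scalars produced above, linearity of the evaluation in each vertex color (Theorem \ref{t:RT}(1)), applied inside a small disk around each altered vertex, gives $\langle \Delta \cap D\rangle_D = \alpha\langle \Delta' \cap D\rangle_D$, so $\Delta - \alpha\Delta'$ lies in $N(\Sigma,\VV)$, as required.

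The main technical obstacle will be Move 1: the scalars contributed by $j_X$, $\ev_{X^*}$, and the associators/unitors appearing in \eqref{e:composition} must be tracked carefully against the structural morphisms implicit in the simple target $\phi'$, and the same care is needed at a slightly lesser scale for the re-bracketing in Move 2. No explicit computation of $\alpha_1$ is required, however, since $\mu_{|G|}$ is closed under products and inverses and every contributing factor is manifestly in $\mu_{|G|}$; the argument is bookkeeping rather than calculation.
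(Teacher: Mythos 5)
Your proposal is correct and follows essentially the same route as the paper: (1) is read off from the moves, and (2) is proved move by move by tracking that every structural morphism of $\Vect_G^\omega$ (associators, unitors, $j$, $\ev$, $\coev$) acts on the relevant one-dimensional Hom-spaces by a scalar in $\mu_{|G|}$, with $\alpha = 1$ for the coevaluation move. The only step the paper makes explicit that you fold into your blanket scalar bookkeeping is the cyclic reordering of the simple vertex labels via the $z$-isomorphism (Lemma \ref{lem:z_simple}), needed so that $X$ and $X^*$ occupy the positions required by the composition map \eqref{e:composition}.
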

\begin{proof}
The proof of (1) follows directly from the definition  of each local move. For (2), we'll consider each local move separately.  In each case, we need to show that $\Delta$ is equivalent to $\alpha \Delta'$ in $H$.  

For the first (edge contraction) local move in Figure \ref{f:local_rels1},  using the same notation as in the figure, the vertex label $\psi \cc{X} \phi$ in $\Delta$ is given by the following composition.  Since $\Gamma$ is simple, there exist integers $l,k$ and simple morphisms $\phi', \psi'$ such that $\psi = z^l(\psi')$ and $\phi = z^k(\phi')$. Then we repeatedly apply associators and the cyclic $z$-morphism of Equation \ref{e:cyclic} to $\phi$ and $\psi$ until the tensor factors of the codomain are rearranged in the order of the left hand side of Equation \ref{e:composition} and that $X$ and $X^*$ are isolated (not contained in any parentheses).  After applying the $\ev_X$ morphism, we reassociate until the new label $\ph\cc{X}\psi$ has the left-associated parenthesization.  Since every edge is labeled by an object of the form $\delta_g$, each structural morphism consists of multiplication by $\omega(g,h,k)$ for some $g,h,k \in G$.  Similarly, by Lemma \ref{lem:z_simple}, every $z$-morphism consists of multiplication by some $\beta \in \mu_{|G|}$.  Thus, the overall composition consists of multiplication by an element $\alpha \in \mu_{|G|}$.

For the second local move (tensoring parallel edges), there are two cases: $k = 0$ and $k > 0$.  In the $k = 0$ case,  we apply inverse unitors to each vertex label to introduce an edge labeled by the unit object, followed by reassocation.  In the $k > 0$ case, we need to reassociate to group together labels of the parallel edges then reassociate at the end.  For the same reasons as for the first move (every edge is labeled by a $\delta_g$), it follows that the result of this local move is also of the desired form.

For the third local move (adding a $\coev$-labeled vertex), the colored graph given by direct application of the local move to a simple graph is already simple, so we can pick $\alpha = 1$.  
\end{proof}

\subsection{No Boundary Case}

We first prove our theorem in the easier case where the surface $\Si$ is closed.  

\begin{thm}\label{thm:closed}
The image of any twisted Dijkgraaf-Witten representation of a mapping class group of an orientable, closed surface $\Si$ is finite.
\end{thm}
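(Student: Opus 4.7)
The plan is to exhibit a finite subset of $\Hs(\Si)$ that spans it and is preserved setwise by the action of $\MCG(\Si)$; the image of $\MCG(\Si)$ in $\mathrm{GL}(\Hs(\Si))$ will then sit inside a finite permutation group and in particular be finite.

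First, I fix a finite graph $\Gamma_0 \subset \Si$ whose complement $\Si \setminus \Gamma_0$ is a disjoint union of open disks, for example the $1$-skeleton of a triangulation of $\Si$. Let $\mathcal{B}_0$ be the set of simple colorings of $\Gamma_0$ in the sense of Definition \ref{def:simple_graph}. It is finite, since a simple coloring is determined by an assignment of an element of $G$ to each oriented edge of $\Gamma_0$ with products equal to $1$ at every vertex. Using additivity in edge colors (Figure \ref{f:linearity}) to reduce to simple edge labels, together with the local moves of Figure \ref{f:local_rels1} applied inside each complementary disk to push arbitrary graphs onto $\Gamma_0$, one verifies that $\{[\Gamma_0,c] : c \in \mathcal{B}_0\}$ spans $\Hs(\Si)$.

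Second, by the Dehn--Lickorish theorem (the closed-surface specialization of the Birman generators), $\MCG(\Si)$ is generated by a finite collection of Dehn twists. Fix such a generator $\tau$ and a coloring $c \in \mathcal{B}_0$. Then $\tau \cdot [\Gamma_0, c] = [\tau(\Gamma_0), c]$ is a simple coloring of $\tau(\Gamma_0) \subset \Si$, whose complement is again a disjoint union of disks. Choose a finite sequence of local moves (and their inverses) that transforms $(\tau(\Gamma_0), c)$ into a colored graph supported on $\Gamma_0$, for instance by passing through a common refinement of $\Gamma_0$ and $\tau(\Gamma_0)$: first expand $\Gamma_0$ into the refinement by inverse edge-contractions, then contract the excess edges down to reach $\Gamma_0$ on the other side. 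Because the starting graph is simple and each intermediate graph remains simple, Proposition \ref{prop:omega} applies at every step and contributes only a scalar in $\mu_{|G|}$. Hence
\[
\tau \cdot [\Gamma_0, c] \;=\; \alpha(\tau,c) \cdot [\Gamma_0, c'(\tau,c)] \quad \text{in } \Hs(\Si),
\]
with $\alpha(\tau,c) \in \mu_{|G|}$ and $c'(\tau,c) \in \mathcal{B}_0$.

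Third, define
\[
\mathcal{B} \;=\; \bigl\{\, \zeta \cdot [\Gamma_0, c] : \zeta \in \mu_{|G|},\; c \in \mathcal{B}_0 \,\bigr\} \subset \Hs(\Si),
\]
a finite set that still spans $\Hs(\Si)$. The computation in the second step shows that each Dehn-twist generator permutes $\mathcal{B}$, and since these generators generate the whole group, every element of $\MCG(\Si)$ permutes $\mathcal{B}$. The image of $\MCG(\Si)$ in $\mathrm{GL}(\Hs(\Si))$ is therefore contained in the finite group $\mathrm{Sym}(\mathcal{B})$, and the theorem follows.

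The main obstacle is the verification in the second step that the reduction can actually be arranged with \emph{every} intermediate graph simple, which is what allows Proposition \ref{prop:omega} to be applied repeatedly. Concretely, one needs the combinatorial fact that any two embedded graphs in $\Si$ with disk complement are related by a finite sequence of the local moves from Figure \ref{f:local_rels1} (or their inverses) through simple intermediate graphs. The common-refinement argument sketched above is the natural strategy, but executing it rigorously requires careful bookkeeping of how simple vertex labels split under inverse edge-contractions and how they recombine under contractions, to ensure that Proposition \ref{prop:omega} is applicable at every single stage of the reduction and not just at its endpoints.
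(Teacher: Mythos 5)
Your overall strategy is the same as the paper's: a finite spanning set of simple colored graphs supported on a fixed skeleton, an argument that each Dehn twist generator sends a spanning element to an element of $\mu_{|G|}$ times the spanning set (with Proposition \ref{prop:omega} controlling the scalars), and the conclusion that the image embeds in the permutations of the finite set $\mu_{|G|}\cdot S$. The first step (spanning) is essentially the paper's fundamental-polygon argument and is fine in outline. The problem is your second step, which is the heart of the theorem and which you yourself flag as "the main obstacle": you need that $(\tau(\Gamma_0),c)$ can be carried back to a simple colored graph on $\Gamma_0$ by a finite sequence of the local moves of Figure \ref{f:local_rels1}, with every intermediate coloring simple, so that Proposition \ref{prop:omega} applies at each stage. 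Asserting this and sketching a "common refinement" is not a proof, and the specific mechanism you propose --- expand $\Gamma_0$ into the refinement by inverse edge-contractions, then contract the excess edges --- does not do the job: $\Gamma_0$ and $\tau(\Gamma_0)$ are not related by subdivision (a Dehn twist rewraps an edge around a curve), and the union graph is not reached from either one by edge contractions alone. Transferring the colors from the twisted edges back onto $\Gamma_0$ genuinely requires the parallel-edge tensoring move (including its $k=0$ case to insert trivially labeled edges), insertion of $\coev$-labeled vertices to flip orientations, and slides across the complementary disks, together with the $z$-morphism bookkeeping of Lemma \ref{lem:z_simple}; which sequence works depends on the geometry of the particular generator.

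This is exactly the content the paper supplies and your proposal omits: it fixes the Lickorish generating set and, for each type of twist, exhibits an explicit sequence of local moves (Figures \ref{fig:tikzTwist1_1} and \ref{fig:tikzTwist2_0}) taking the image of a spanning graph back to an element of $S$, checking at each step that Proposition \ref{prop:omega} applies and hence that only scalars in $\mu_{|G|}$ appear. Without either these explicit computations or a rigorously proved general lemma that any two simple colored graphs with disk complement and the same class in $H$ are connected through simple colorings by the allowed moves with scalars in $\mu_{|G|}$, your argument establishes the framework but not the key permutation property, so the central step of the proof remains unverified.
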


\begin{proof}
Let $\Gamma$ be a $\Vect_G^\omega$-colored graph embedded in $\Si$, and let $g \ge 1$ be the genus of $\Si$ (if $g = 0$, the mapping class group is trivial). Thinking of $\Si$ as a quotient of its fundamental $4g$-gon, by isotopy we may assume that the vertices of $\Gamma$ lie in the interior of the polygon, none of the edges of $\Gamma$ intersect the corners of the polygon, and that the edges of $\Gamma$ only meet the sides of the polygon transversally.  Applying the evaluation map of Theorem \ref{t:RT} on the interior of the polygon shows that $\Gamma$ is equivalent to a graph with a single vertex whose edges are simple closed curves, each of which intersect the boundary of the polygon precisely once. Note that we can always flip the orientation of an edge by adding a $\coev$-labeled vertex to the relevant edge and then contracting on the edge corresponding to the original edge label.   Thus, by using the local relations, we can replace all the edges intersecting a side with a single edge labeled by the tensor product of their labels.     If there are no edges intersecting a side, we can insert a single edge labeled by the group identity into $\Gamma$ that intersects only that side.  Thus, $\Gamma$ is equivalent to a colored graph with one vertex $v$ and edges $e_1, \ldots, e_{2g}$ corresponding to the standard generators of $\pi_1(M,v)$ as shown in Figure \ref{fig:span}.

By Theorem \ref{t:RT} and the definition of the quotient map identifying the sides of the fundamental polygon, the vertex $v$ is colored by an element $\phi(v) \in \Hom (\one, \bigotimes_{i=1}^g V(e_{2i-1}) \otimes V(e_{2i})  \otimes V(e_{2i-1})^* \otimes V(e_{2i})^*)$, where $V(e_i) \in \Obj(\Vect_G^\omega)$ is the coloring of the edge $e_i$.   

We claim that the representation space $H$ is spanned by the set of such colored graphs $\Gamma$ such that each $V(e_i)$ is simple. This follows from the additivity of the evaluation map of Theorem \ref{t:RT} in the direct sum.   Strictly speaking, we can only take advantage of the additivity on a disk, not on an edge $e_i$, which is a $v$-based loop. However, we can easily add a $\coev$-labeled vertex to any edge $e_i$, apply the additivity on one of the two resulting edges (which lies in an embedded disk), and then contract on the other edge to get the decomposition we want.

Since isomorphic colorings give the same evaluation, it follows that $H$ is spanned by colored graphs $\Gamma$ such that
each $V(e_i) = \delta_{g_i}$ for some $g_i \in G$.  For such $\Gamma$,  the space of possible $v$-colors $\Hom (\one, \bigotimes_{i=1}^g V(e_{2i-1}) \otimes V(e_{2i})  \otimes V(e_{2i-1})^* \otimes V(e_{2i})^*)$ is one-dimensional if $\prod_{i=1}^g [g_{2i-1}, g_{2i}] = 1$, and zero-dimensional otherwise.

By using the linearity with respect to the vertex label, we can further restrict to simple colored graphs $\Gamma$.  Thus, the representation space $H$ has a spanning set $S$ consisting of all simple colored graphs $\Gamma$ with one vertex $v$ and edges $e_1, \ldots, e_{2g}$ corresponding to the standard generators of $\pi_1(M,v)$.  Since there are $|G|$ objects of the form $\delta_g$ in $\Vect_G^\omega$ and at most $4g$ choices of simple morphisms labeling the vertex for a fixed edge labelling, the spanning set $S$ is finite.

\newdimen\R
\R=0.8cm

\begin{figure}
   \centering
    \begin{tikzpicture}[scale=3]    

      \draw (0:\R) \foreach \x in {45,90,...,359} {
                -- (\x:\R)
            } -- cycle;

    \begin{scope}[very thick,decoration={
    markings,
    mark=at position 0.5 with {\arrow{>}}}
    ]  
      \draw[postaction={decorate}]  (0, 0) --  (22: {0.923879*\R}) node[pos=.5,sloped,above]{$g$};
      \draw[postaction={decorate}]  (0, 0) --  (67: {0.923879*\R}) node[pos=.5,sloped,above]{$h$};
      \draw[postaction={decorate}]  (112: {0.923879*\R}) -- (0, 0) node[pos=.5,sloped,above]{$g$};
      \draw[postaction={decorate}]  (157: {0.923879*\R}) -- (0, 0) node[pos=.5,sloped,above]{$h$};
      \draw[postaction={decorate}]  (0, 0) --  (202: {0.923879*\R}) node[pos=.5,sloped,above]{$k$};
      \draw[postaction={decorate}]  (0, 0) --  (247: {0.923879*\R}) node[pos=.5,sloped,above]{$l$};
      \draw[postaction={decorate}]  (292: {0.923879*\R}) -- (0, 0) node[pos=.5,sloped,above]{$k$};
      \draw[postaction={decorate}]  (337: {0.923879*\R}) -- (0, 0) node[pos=.5,sloped,above]{$l$};
    \end{scope}
    \end{tikzpicture}
\caption{Element of the spanning set $S$ for a genus 2 surface}
\label{fig:span}
\end{figure}
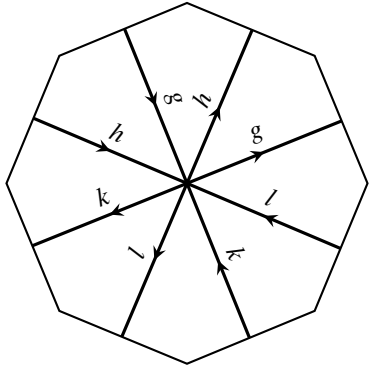

\begin{figure}
 \centering
 \includegraphics[width=.60\textwidth]{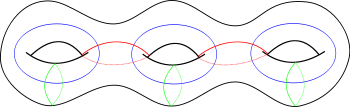}
 \caption{Simple closed curves for the Dehn twists in the Lickorish generating set for the mapping class group of a genus $3$ closed surface (Image source: \url{https://en.wikipedia.org/wiki/Dehn_twist})}
\label{fig:lickorish}
\end{figure}

The mapping class group of $\Si$ is generated by the Lickorish generating set consisting of Dehn twists around $3g-1$ simple closed curves.   These can be divided into two types of twists: the ones around a single hole (the blue and green curves in Figure \ref{fig:lickorish}), and the ones connecting two holes (the red curves). The action of a Dehn twist around a simple closed curve corresponds to cutting the manifold along the curve, holding one piece in place and twisting the other piece by $2\pi$ radians in a clockwise direction, then gluing the two pieces back together.

To understand the action of each type of Dehn twist on the representation space $H$, we will consider the action on the spanning set $S$.  First, we claim that we can apply local moves to any element of $S$ to get a colored graph of the form shown in the first subfigure of Figure \ref{fig:tikzTwist1_1}, where the unshown part of the fundamental polygon looks the same as in the definition of $S$.  Indeed, to pass from an arbitrary element of $S$, to a colored graph of the form shown in the first subfigure of $\ref{fig:tikzTwist1_1}$, we first add coevalution-labeled vertices to each  edge intersecting the three shown sides of the fundamental polygon.  Then connect the new vertices using edges labeled by the trivial object (this corresponds to applying the second local move in Figure \ref{f:local_rels1} with  $k = 0$), contract the connections to get one new vertex, and tensor together the edges connecting the old vertex to the new vertex.

The action of the first type of Dehn twist on an arbitrary element of $S$ is shown in the first two subfigures of Figure \ref{fig:tikzTwist1_1}.  
After applying the Dehn twist, we have the simple colored graph shown in the second subfigure of Figure \ref{fig:tikzTwist1_1}.  We then apply local moves in the remaining subfigures.  For example, to go from the second subfigure to the third, we first apply the third local move of Figure \ref{f:local_rels1} to add a $\coev$-labeled vertex to the top left $g$-labeled edge. Note that we have flipped the orientation of one of the edges incident to this new vertex, dualizing its object label.  We then apply the second local move (tensoring edges) with the number of parallel edges $k = 0$ to add an edge labeled by the trivial object between the new vertex and the old one. To go from the third to the fourth subfigure, we apply the edge contraction local move on the new edge.  Lastly, we get to the fifth subfigure by applying the tensoring edges local move again (strictly speaking, this is not a valid move since it does not take place on a disk, but one can easily add a $\coev$-labeled vertex to each of the two parallel edges, connect them, contract the connection, tensor together each of the two pairs of parallel edges, and contract one of the resulting edges to get the same result). By repeated application of Proposition $\ref{prop:omega}$, the resulting colored graph is equivalent to $\beta \Delta$, for some $\beta \in \mu_{|G|}$ and  $\Delta \in S$.    Thus, the first type of Dehn twist maps $S$ to $\mu_{|G|} S$.

An analogous proof works for the second type of Dehn twist shown in Figure \ref{fig:tikzTwist2_0}. Thus, the image of any such mapping class group representation is a quotient of the group of permutations of the finite set $\mu_{|G|} S$, hence finite.
\end{proof}

\begin{rmk} 
When $\omega = 1$ and $\Si$ is closed, this representation is a permutation representation.  
\end{rmk}

\begin{proof}
Under the assumption that the representation in \cite{fjfu} coincides with ours, this fact follows from Theorem 2.6 in \cite{fjfu}, but we can also prove it directly.  We first note that $G$ acts on $S$ by simultaneous conjugation of all edge labels by a single element $g \in G$.  If $s \in S$ and $g \in G$, then we can retrieve $s$ from $gs$ by separating two oppositely oriented, $g$-labeled edges from each edge in the embedded graph $gs$.  This results in a loop labeled by $g$, whose evaluation is $1$.  Thus $gs$ is equivalent to $s$.  Moreover, the cardinality $|S/G| = |\Hom(\pi_1(M), G)|/|G|$ is equal to the dimension of the untwisted Dijkgraaf-Witten representation space $H$ \cite{dijkgraaf1990}.  Hence, $S/G$ is a basis for $H$.  The mapping class group action on $S$ commutes with the $G$-action, so the mapping class group permutes $S/G$, i.e. $H$ is a permutation representation.
\end{proof}

\newcommand{\nc}{\newcommand}
\newcommand{\rnc}{\renewcommand}


     \nc{\lcx}{-0.5}
     \nc{\lcy}{0.866}
     \nc{\rcx}{-\lcx}
     \nc{\rcy}{\lcy}

     \nc{\makeBdy}{
       \begin{scope}[very thick,decoration={
             markings,
             mark=at position 0.5 with {\arrow{>}}}
         ]  

         \draw (-1,0) -- (\lcx, \lcy); 
         \draw (\lcx, \lcy) -- (\rcx, \rcy); 
         \draw  (1, 0) -- (\rcx, \rcy);
       \end{scope}
     }

    \nc{\lcutx}{-0.6}
    \nc{\lcuty}{0.6928}
    \nc{\lcut}{(\lcutx, \lcuty)}
    \nc{\rcutx}{-\lcutx}
    \nc{\rcuty}{\lcuty}
    \nc{\rcut}{(\rcutx, \rcuty)}

    \nc{\mvx}{0}
    \nc{\mvy}{0.2}
    \nc{\mv}{(\mvx, \mvy)}

    \nc{\outEdge}{\draw[postaction={decorate}]  (0, 0) -- \mv node[pos=.5, right]{$hgh^{-1}$};}

    \nc{\mtopx}{0}
    \nc{\mtopy}{0.866}
    \nc{\mtop}{(\mtopx, \mtopy)}

\begin{figure}
\centering

\begin{tabular}{|c|c|}

\hline

    \begin{tikzpicture}[scale=3]    

      \makeBdy

    \begin{scope}[very thick,decoration={
    markings,
    mark=at position 0.5 with {\arrow{>}}}
    ]  

      \draw[loosely dashed] \lcut -- \rcut;


      \draw[postaction={decorate}]   \mv -- (0.75, 0.433) node[pos=.5,sloped,above]{$h$};
      \draw[postaction={decorate}]   \mv -- \mtop node[pos=.5,left]{$g$};
      \draw[postaction={decorate}]  (-0.75, 0.433) -- \mv node[pos=.5,sloped,above]{$h$};
      \outEdge

    \end{scope}
    \end{tikzpicture}

&

    \begin{tikzpicture}[scale=3]
    
      \makeBdy

    \begin{scope}[very thick,decoration={
    markings,
    mark=at position 0.5 with {\arrow{>}}}
    ]

        \draw[postaction={decorate}]   \mv -- (0.75, 0.433) node[pos=.5,sloped,above]{$h$};
        \draw[postaction={decorate}]  (-0.75, 0.433) -- \mv node[pos=.5,sloped,above]{$h$};
        \outEdge

        \draw[postaction={decorate}]  \mv -- \rcut
        node[pos=.5,sloped,above]{$g$};
        \draw[postaction={decorate}]  \lcut -- \mtop
        node[pos=.5,sloped,below]{$g$};

    \end{scope}
    \end{tikzpicture}

\\ \hline

      \begin{tikzpicture}[scale=3]
    
      \makeBdy

    \begin{scope}[very thick,decoration={
    markings,
    mark=at position 0.5 with {\arrow{>}}}
    ] 

        \draw[postaction={decorate}]   \mv -- (0.75, 0.433) node[pos=.5,sloped,above]{$h$};
        \draw[postaction={decorate}]  (-0.75, 0.433) -- \mv node[pos=.5,sloped,above]{$h$};

        \outEdge

        \draw[postaction={decorate}]  \mv -- \rcut
        node[pos=.5,sloped,above]{$g$};
        \draw[postaction={decorate}]  \lcut --  ({(\lcutx + \mtopx)/2} , {(\lcuty + \mtopy)/2})
        node[pos=.5,sloped,below]{$g$};
        \draw[postaction={decorate}]  ({(\lcutx + \mtopx)/2} , {(\lcuty + \mtopy)/2}) -- \mtop
        node[pos=.5,sloped,below]{$g$};

        \draw  \mv --  ({(\lcutx + \mtopx)/2} , {(\lcuty + \mtopy)/2});

    \end{scope}
    \end{tikzpicture}

&

    
    \begin{tikzpicture}[scale=3]

      \makeBdy

      \begin{scope}[very thick,decoration={
            markings,
            mark=at position 0.5 with {\arrow{>}}}
        ] 

        \draw[postaction={decorate}]   \mv -- (0.75, 0.433) node[pos=.5,sloped,above]{$h$};
        \draw[postaction={decorate}]  (-0.75, 0.433) -- \mv node[pos=.5,sloped,above]{$h$};

        \outEdge

        \draw[postaction={decorate}]  \mv -- \rcut
        node[pos=.5,sloped,above]{$g$};
        \draw[postaction={decorate}]  \lcut -- \mv
        node[pos=.5,sloped,above]{$g$};
        \draw[postaction={decorate}]  \mv -- \mtop
        node[pos=.5,right]{$g$};

    \end{scope}
    \end{tikzpicture}

\\ \hline


    \begin{tikzpicture}[scale=3]

     \makeBdy
    
    \begin{scope}[very thick,decoration={
    markings,
    mark=at position 0.5 with {\arrow{>}}}
    ]  
        \draw[postaction={decorate}]   \mv -- (0.75, 0.433) node[pos=.5,sloped,above]{$hg$};
        \draw[postaction={decorate}]   \mv -- \mtop node[pos=.5,left]{$g$};
        \draw[postaction={decorate}]  (-0.75, 0.433) -- \mv node[pos=.5,sloped,above]{$hg$};
        \outEdge

    \end{scope}
    \end{tikzpicture}

&

\\ \hline
\end{tabular}

\caption{Using local moves to calculate the action of the first type of Dehn twist on an arbitrary element of the spanning set $S$. Read from left to right, then top to bottom.  Unlabeled interior edges are colored by the group identity element.  The Dehn twist is performed along the dashed simple closed curve.  The first two subfigures show the action of the Dehn twist.  The last three show the local moves relating the image of the Dehn twist to another element of $S$.}
\label{fig:tikzTwist1_1}
\end{figure}
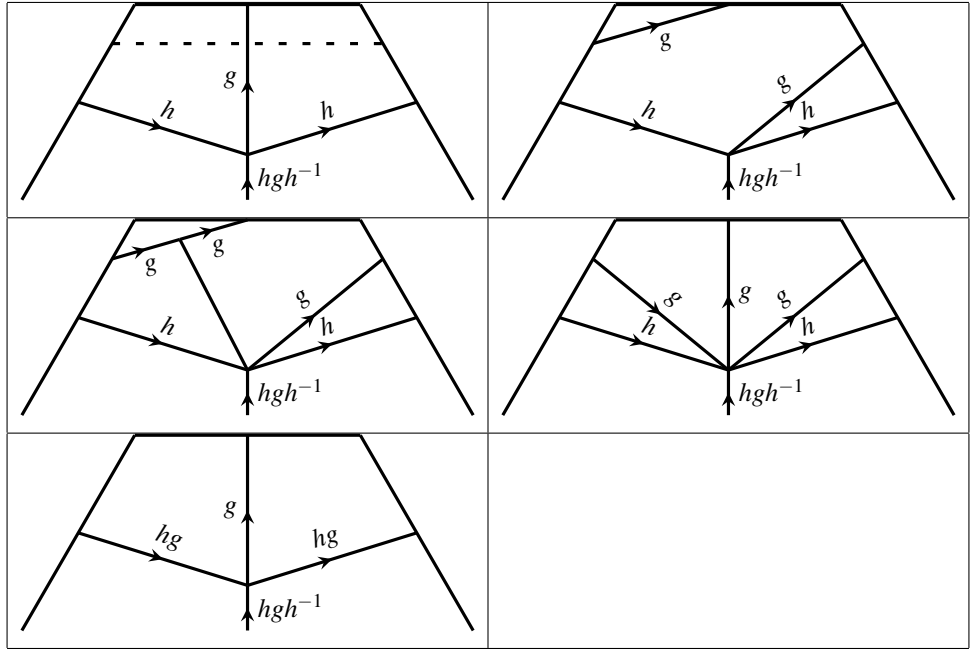


\nc{\makeBdyTwo}{
  \begin{scope}[very thick,decoration={
    markings,
    mark=at position 0.5 with {\arrow{>}}}
    ]  
    \path[draw]
    (1.0,          0.0) --       
    (0.92388,      0.382683)  -- 
    (0.707107,     0.707107) --  
    (0.382683,     0.92388) --   
    (0,            1.0)   --     
    (-0.382683,    0.92388) --   
    (-0.707107 ,   0.707107)  -- 
    ( -0.92388,    0.382683) --  
    ( -1.0     ,   0);           

  \end{scope}
}

\nc{\mvTwo}{ (-0.312076, 0.753418) }

\nc{\outEdgeTwo}{\draw[postaction={decorate}]  (0, 0) -- \mv node[pos=.3, right]{$[a,b][c,d]$};}

\nc{\cutOneX}{{0.8*(-0.92388) + 0.2*(-0.707107)}}
\nc{\cutOneY}{{0.8*(0.382683) + 0.2*(0.707107)}}
\nc{\cutOne}{(\cutOneX, \cutOneY)}

\nc{\cutTwoX}{{0.8*(0.382683) + 0.2*(0)}}
\nc{\cutTwoY}{{0.8*(0.92388) + 0.2*(1.0)}}
\nc{\cutTwo}{(\cutTwoX, \cutTwoY)}

\nc{\cutThreeX}{{0.8*(0.707107 ) + 0.2*(0.92388 )}}
\nc{\cutThreeY}{{0.8*(0.707107 ) + 0.2*(0.382638 )}}
\nc{\cutThree}{(\cutThreeX, \cutThreeY)}

\nc{\cutFourX}{{0.2*( 0.382683) + 0.8*(0.707107)}}
\nc{\cutFourY}{{0.2*( 0.92388) + 0.8*(0.707107)}}
\nc{\cutFour}{(\cutFourX, \cutFourY)}

\nc{\cutFiveX}{{0.2*( 1.0) + 0.8*(0.92388)}}
\nc{\cutFiveY}{{0.2*( 0.0) + 0.8*(0.382638)}}
\nc{\cutFive}{(\cutFiveX, \cutFiveY)}

\nc{\cutSixX}{{0.2*( 0.707107) + 0.8*(0.92388)}}
\nc{\cutSixY}{{0.2*( 0.707107) + 0.8*(0.382683)}}
\nc{\cutSix}{(\cutSixX, \cutSixY)}

\nc{\cutSevenX}{{0.2*( 0.382683) + 0.8*(0.0)}}
\nc{\cutSevenY}{{0.2*( 0.92388) + 0.8*(1.0)}}
\nc{\cutSeven}{(\cutSevenX, \cutSevenY)}

\nc{\cutEightX}{{0.2*( -0.382683) + 0.8*(0.0)}}
\nc{\cutEightY}{{0.2*( 0.92388) + 0.8*(1.0)}}
\nc{\cutEight}{(\cutEightX, \cutEightY)}

\begin{figure}
\centering

\begin{tabular}{|c|c|}

\hline
    \begin{tikzpicture}[scale=3]

     \makeBdyTwo

     \draw[dotted] \cutOne -- \cutTwo;
     \draw[dotted] \cutThree -- \cutFour;
     \draw[dotted] \cutFive -- \cutSix;
     \draw[dotted] \cutSeven -- \cutEight;
         
    \begin{scope}[very thick,decoration={
    markings,
    mark=at position 0.5 with {\arrow{>}}}
    ]  
       \draw[postaction={decorate}]  \mv -- (   0.96194 , 0.191342) node[pos=.5,sloped,above]{$a$};
       \draw[postaction={decorate}]  \mv -- ( 0.815493,  0.544895) node[pos=.5,sloped,above]{$b$};
       \draw[postaction={decorate}] (0.544895,  0.815493) -- \mv  node[pos=.5,sloped,above]{$a$};
       \draw[postaction={decorate}]   (0.191342,  0.96194) -- \mv  node[pos=.4,sloped,above]{$b$}; 
       \draw[postaction={decorate}]  \mv -- (-0.191342,  0.96194) node[pos=.4,sloped,above]{$c$}; 
       \draw[postaction={decorate}]  \mv -- (-0.544895,  0.815493) node[pos=.5,sloped,above]{$d$}; 
       \draw[postaction={decorate}] (-0.815493,  0.544895) -- \mv  node[pos=.5,sloped,above]{$c$};
       \draw[postaction={decorate}] (-0.96194,   0.191342) -- \mv  node[pos=.5,sloped,above]{$d$};

         \outEdgeTwo
    \end{scope}
    \end{tikzpicture}

&

    
    \begin{tikzpicture}[scale=3]

     \makeBdyTwo

     \draw[dotted] \cutOne -- \cutTwo;
     \draw[dotted] \cutThree -- \cutFour;
     \draw[dotted] \cutFive -- \cutSix;
     \draw[dotted] \cutSeven -- \cutEight;
         
    \begin{scope}[very thick,decoration={
    markings,
    mark=at position 0.5 with {\arrow{>}}}
    ]  
       \draw[postaction={decorate}]  \mv -- \mvTwo node[pos=.5,sloped,above]{\tiny $g := b^{-1}cdc^{-1}$};

       \draw[postaction={decorate}]  \mv -- (   0.96194 , 0.191342) node[pos=.5,sloped,above]{$a$};
       \draw[postaction={decorate}]  \mv -- ( 0.815493,  0.544895) node[pos=.5,sloped,above]{$b$};
       \draw[postaction={decorate}] (0.544895,  0.815493) -- \mv  node[pos=.5,sloped,above]{$a$};
       \draw[postaction={decorate}]   (0.191342,  0.96194) -- \mvTwo  node[pos=.5,sloped,above]{$b$}; 
       \draw[postaction={decorate}]  \mvTwo -- (-0.191342,  0.96194) node[pos=.5,sloped,above]{$c$}; 
       \draw[postaction={decorate}]  \mvTwo -- (-0.544895,  0.815493) node[pos=.5,sloped,above]{$d$}; 
       \draw[postaction={decorate}] (-0.815493,  0.544895) -- \mvTwo  node[pos=.5,sloped,above]{$c$};
       \draw[postaction={decorate}] (-0.96194,   0.191342) -- \mv  node[pos=.5,sloped,above]{$d$};

         \outEdgeTwo
    \end{scope}
    \end{tikzpicture}

\\ \hline

    
    \begin{tikzpicture}[scale=3]

     \makeBdyTwo
         
    \begin{scope}[very thick,decoration={
    markings,
    mark=at position 0.5 with {\arrow{>}}}
    ]  
       \draw[postaction={decorate}]  \mv -- \cutTwo node[pos=.5,sloped,above]{$g$};
       \draw[postaction={decorate}]  \cutOne -- \mvTwo node[pos=.5,sloped,below]{$g$};
       \draw[postaction={decorate}]  \cutThree -- \cutFour node[pos=.5,sloped,below]{$g$};
       \draw[postaction={decorate}]  \cutFive -- \cutSix node[pos=.5,sloped,below]{$g$};
       \draw[postaction={decorate}]  \cutSeven -- \cutEight node[pos=.5,sloped,red]{$g$};

       \draw[postaction={decorate}]  \mv -- (   0.96194 , 0.191342) node[pos=.5,sloped,above]{$a$};
       \draw[postaction={decorate}]  \mv -- ( 0.815493,  0.544895) node[pos=.5,sloped,above]{$b$};
       \draw[postaction={decorate}] (0.544895,  0.815493) -- \mv  node[pos=.5,sloped,above]{$a$};
       \draw[postaction={decorate}]   (0.191342,  0.96194) -- \mvTwo  node[pos=.5,sloped,below]{$b$}; 
       \draw[postaction={decorate}]  \mvTwo -- (-0.191342,  0.96194) node[pos=.5,sloped,above]{$c$}; 
       \draw[postaction={decorate}]  \mvTwo -- (-0.544895,  0.815493) node[pos=.5,sloped,above]{$d$}; 
       \draw[postaction={decorate}] (-0.815493,  0.544895) -- \mvTwo  node[pos=.5,sloped,above]{$c$};
       \draw[postaction={decorate}] (-0.96194,   0.191342) -- \mv  node[pos=.5,sloped,above]{$d$};

         \outEdgeTwo
    \end{scope}
    \end{tikzpicture}

&

    
    \begin{tikzpicture}[scale=3]

     \makeBdyTwo
         
    \begin{scope}[very thick,decoration={
    markings,
    mark=at position 0.5 with {\arrow{>}}}
    ]  
       \draw[postaction={decorate}]  \mv -- \cutTwo node[pos=.5,sloped,above]{$g$};
       \draw[postaction={decorate}]  \cutThree -- \mv node[pos=.3,sloped,above]{$g$};

       \draw[postaction={decorate}]  \mv -- ( 0.96194 , 0.191342) node[pos=.8,sloped,above]{$ag^{-1}$};
       \draw[postaction={decorate}]  \mv -- ( 0.815493,  0.544895) node[pos=.7,sloped,below]{$gb$};
       \draw[postaction={decorate}] (0.544895,  0.815493) -- \mv  node[pos=.2,sloped,above]{$ag^{-1}$};
       \draw[postaction={decorate}]   (0.191342,  0.96194) -- \mvTwo  node[pos=.5,sloped,below]{$gb$}; 
       \draw[postaction={decorate}]  \mvTwo -- (-0.191342,  0.96194) node[pos=.5,sloped,above]{$gc$}; 
       \draw[postaction={decorate}]  \mvTwo -- (-0.544895,  0.815493) node[pos=.5,sloped,above]{$d$}; 
       \draw[postaction={decorate}] (-0.815493,  0.544895) -- \mvTwo  node[pos=.5,sloped,below]{$gc$};
       \draw[postaction={decorate}] (-0.96194,   0.191342) -- \mv  node[pos=.5,sloped,above]{$d$};

         \outEdgeTwo
    \end{scope}
    \end{tikzpicture}

\\ \hline 


    \begin{tikzpicture}[scale=3]

     \makeBdyTwo
         
    \begin{scope}[very thick,decoration={
    markings,
    mark=at position 0.5 with {\arrow{>}}}
    ]  
       \draw[postaction={decorate}]  \mv -- \cutTwo node[pos=.8,sloped,red]{$g$};
       \draw[postaction={decorate}]  \cutThree -- \mv node[pos=.2,sloped,above]{$g$};

       \draw[postaction={decorate}]  \mv -- ( 0.96194 , 0.191342) node[pos=.8,sloped,above]{$ag^{-1}$};
       \draw[postaction={decorate}]  \mv -- ( 0.815493,  0.544895) node[pos=.7,sloped,below]{$gb$};
       \draw[postaction={decorate}] (0.544895,  0.815493) -- \mv  node[pos=.2,sloped,above]{$ag^{-1}$};
       \draw[postaction={decorate}]   (0.191342,  0.96194) -- \mv  node[pos=.2,sloped,above]{$gb$}; 
       \draw[postaction={decorate}]  \mv -- (-0.191342,  0.96194) node[pos=.8,sloped,below]{$gc$}; 
       \draw[postaction={decorate}]  \mv -- (-0.544895,  0.815493) node[pos=.7,sloped,above]{$d$}; 
       \draw[postaction={decorate}] (-0.815493,  0.544895) -- \mv  node[pos=.3,sloped,above]{$gc$};
       \draw[postaction={decorate}] (-0.96194,   0.191342) -- \mv  node[pos=.3,sloped,above]{$d$};

         \outEdgeTwo
    \end{scope}
    \end{tikzpicture}

&


    \begin{tikzpicture}[scale=3]

     \makeBdyTwo
         
    \begin{scope}[very thick,decoration={
    markings,
    mark=at position 0.5 with {\arrow{>}}}
    ]  

       \draw[postaction={decorate}]  \mv -- ( 0.96194 , 0.191342) node[pos=.8,sloped,above]{$ag^{-1}$};
       \draw[postaction={decorate}]  \mv -- ( 0.815493,  0.544895) node[pos=.8,sloped,above]{$gbg^{-1}$};
       \draw[postaction={decorate}] (0.544895,  0.815493) -- \mv  node[pos=.2,sloped,above]{$ag^{-1}$};
       \draw[postaction={decorate}]   (0.191342,  0.96194) -- \mv  node[pos=.2,sloped,above]{$gbg^{-1}$}; 
       \draw[postaction={decorate}]  \mv -- (-0.191342,  0.96194) node[pos=.8,sloped,below]{$gc$}; 
       \draw[postaction={decorate}]  \mv -- (-0.544895,  0.815493) node[pos=.7,sloped,above]{$d$}; 
       \draw[postaction={decorate}] (-0.815493,  0.544895) -- \mv  node[pos=.3,sloped,above]{$gc$};
       \draw[postaction={decorate}] (-0.96194,   0.191342) -- \mv  node[pos=.3,sloped,above]{$d$};

         \outEdgeTwo
    \end{scope}
    \end{tikzpicture}

\\ \hline
\end{tabular}

\caption{Using local moves to calculate the action of the second type of Dehn twist on an arbitrary element of the spanning set $S$. Read from left to right, then top to bottom.   The Dehn twist is performed along the dashed simple closed curve.  The first two subfigures show application of local moves prior to the Dehn twist action. The third shows the action of the twist.  The last three show the local moves relating the image of the Dehn twist to another element of $S$.}
\label{fig:tikzTwist2_0}
\end{figure}
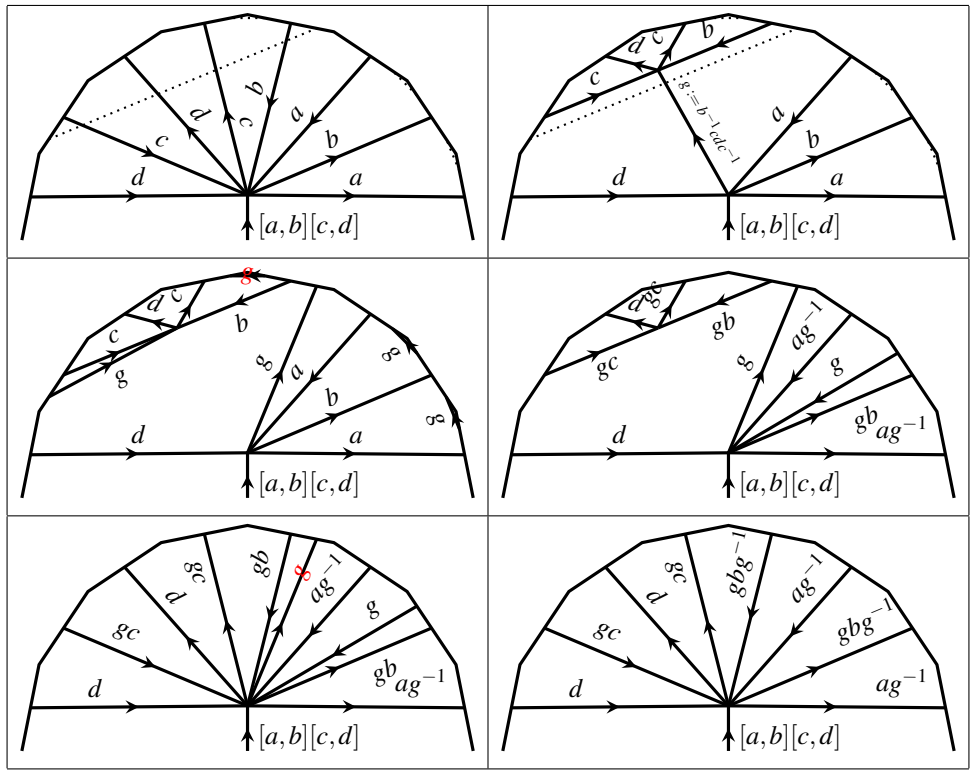

\subsection{Boundary Case}

When $\Si$ has boundary, we denote by $\MCG(\Si)$ the group of isotopy classes of homeomorphisms fixing the boundary of $\Si$ setwise. Given any labelling of the boundary  by objects in the Drinfeld center, $l : \pi_0(\partial M) \to \Obj(Z(\vgo))$, we get a mapping class group representation. Picking one point on each boundary component, we get a set of points $B \subset \partial M$ and bijection $b: B \to \pi_0(\partial M)$.  The representation space $H = \Hs(\Si, \VV)$ corresponding to the labelling $l$ has boundary condition $\VV = (B, F \circ l \circ b)$, where $F$ is the forgetful functor $F: Z(\vgo) \to \vgo$.  The local relations used above still hold in this representation space \cite{kirillovStringNets}.

By a similar argument as in the proof of the Theorem $\ref{thm:closed}$, any such representation space has a finite spanning set $S$ consisting of all simple colored graphs with a single vertex, loops for each of the usual generators of the fundamental group of $\Si$, and a leg from the vertex to each of the boundary components.  

Let $N$ denote the closed surface obtained by filling in all the boundary components of $\Si$ with disks. The mapping class group $\MCG(\Si)$ is generated by the same Dehn twists as $\MCG(N)$, as well as braids interchanging boundary components and mapping classes corresponding to dragging a boundary component along a representative of a standard generator of $\pi_1(N)$ \cite{birman}.  As in the proof of Theorem \ref{thm:closed}, applying any of these generators of $\MCG(\Si)$ to a colored graph in $S$ yields an element in $\mu_{|G|}S$ (see Figures \ref{fig:braid1} and \ref{fig:drag1}).  Since the braid group is also generated by such braids, we have the following theorem.

\nc{\rightX}{0.5}  
\nc{\leftX}{-\rightX} 
\nc{\pY}{0.5}  
\nc{\loopR}{0.3}
\renewcommand{\mv}{(0, -0.2)}

\nc{\makeBase}{
       \draw[very thick]  \mv -- ({\leftX - \loopR}, \pY);
       \draw[very thick]   ({\leftX + \loopR}, \pY) -- \mv;
       
       \begin{scope}[very thick,decoration={
             markings,
             mark=at position 0.5 with {\arrow{>}}}
         ]  

         \draw[postaction={decorate}]  \mv -- (\leftX, \pY) node[pos=.5,above]{$l$};

       \end{scope}

       \draw[very thick]  \mv -- ({\leftX - 4*\loopR}, \pY);
       \draw[very thick]   ({\leftX - 2*\loopR}, \pY) -- \mv;
 
       \draw[very thick]  \mv -- ({\leftX - 3*\loopR}, \pY);       

}

\nc{\makeBaseTwo}{

       \draw[very thick]   ({\leftX + \loopR}, \pY) -- \mv;
       
       \begin{scope}[very thick,decoration={
             markings,
             mark=at position 0.5 with {\arrow{>}}}
         ]  

         \draw[postaction={decorate}]  \mv -- (\leftX, \pY) node[pos=.5,above]{$l$};

       \end{scope}
  
       \draw[very thick]   ({\leftX - 2*\loopR}, \pY) -- \mv;
}

\nc{\bRight}{
        \begin{scope}[very thick,decoration={
           markings,
           mark=at position 0.5 with {\arrow{>}}}
       ]  
       \draw[postaction={decorate}]   plot[domain=360:180] ({\loopR*cos(\x) + \leftX + 3*\loopR}, {\loopR*sin(\x) + \pY});
       \node[below] at  ({\loopR*cos(270) + \leftX + 3*\loopR}, {\loopR*sin(270) + \pY}) {$g$};
     \end{scope}
   }

\nc{\makeBraid}{

     \makeBase

     \begin{scope}[very thick,decoration={
           markings,
           mark=at position 0.5 with {\arrow{>}}}
       ]  

       \draw[postaction={decorate}]   plot[domain=180:0]  
              ({\loopR*cos(\x) + \leftX}, {\loopR*sin(\x) + \pY});
       \node[above] at  ({\loopR*cos(90) + \leftX}, {\loopR*sin(90) + \pY}) {$k$};

       \draw[postaction={decorate}]     plot[domain=0:180] ({2*\loopR*cos(\x) + \leftX}, {2*\loopR*sin(\x) + \pY});
       \node[above] at  ({2*\loopR*cos(90) + \leftX}, {2*\loopR*sin(90) + \pY}) {$g$};
       \draw[postaction={decorate}]      plot[domain=180:0] ({4*\loopR*cos(\x) + \leftX}, {4*\loopR*sin(\x) + \pY});
       \node[above] at  ({4*\loopR*cos(90) + \leftX}, {4*\loopR*sin(90) + \pY}) {$g$};

       \draw[postaction={decorate}]      plot[domain=180:0] ({3*\loopR*cos(\x) + \leftX}, {3*\loopR*sin(\x) + \pY});
       \node[above] at  ({3*\loopR*cos(90) + \leftX}, {3*\loopR*sin(90) + \pY}) {$h$};

     \end{scope}

     \bRight
       
}

\nc{\makeBraidOne}{

     \makeBase

     \begin{scope}[very thick,decoration={
           markings,
           mark=at position 0.25 with {\arrow{>}},
          mark=at position 0.75 with {\arrow{>}}}
       ]  

       \draw[postaction={decorate}]   plot[domain=180:0]  
              ({\loopR*cos(\x) + \leftX}, {\loopR*sin(\x) + \pY});
       \node[above] at  ({\loopR*cos(45) + \leftX}, {\loopR*sin(45) + \pY}) {$k$};
       \node[above] at  ({\loopR*cos(135) + \leftX}, {\loopR*sin(135) + \pY}) {$k$};

       \draw[postaction={decorate}]     plot[domain=0:180] ({2*\loopR*cos(\x) + \leftX}, {2*\loopR*sin(\x) + \pY});
       \node[above] at  ({2*\loopR*cos(45) + \leftX}, {2*\loopR*sin(45) + \pY}) {$g$};
       \node[above] at  ({2*\loopR*cos(135) + \leftX}, {2*\loopR*sin(135) + \pY}) {$g$};
       \draw[postaction={decorate}]      plot[domain=180:0] ({4*\loopR*cos(\x) + \leftX}, {4*\loopR*sin(\x) + \pY});
       \node[above] at  ({4*\loopR*cos(45) + \leftX}, {4*\loopR*sin(45) + \pY}) {$g$};
       \node[above] at  ({4*\loopR*cos(135) + \leftX}, {4*\loopR*sin(135) + \pY}) {$g$};

       \draw[postaction={decorate}]      plot[domain=180:0] ({3*\loopR*cos(\x) + \leftX}, {3*\loopR*sin(\x) + \pY});
       \node[above] at  ({3*\loopR*cos(45) + \leftX}, {3*\loopR*sin(45) + \pY}) {$h$};
       \node[above] at  ({3*\loopR*cos(135) + \leftX}, {3*\loopR*sin(135) + \pY}) {$h$};

     \end{scope}

     \bRight       
}

\nc{\makeBraidTwo}{

     \makeBase

     \begin{scope}[very thick,decoration={
           markings,
           mark=at position 0.1 with {\arrow{>}},
          mark=at position 0.9 with {\arrow{>}}}
       ]  
       \draw[postaction={decorate}]      plot[domain=180:0] ({3*\loopR*cos(\x) + \leftX}, {3*\loopR*sin(\x) + \pY});
       \node[right] at  ({3*\loopR*cos(18) + \leftX}, {3*\loopR*sin(18) + \pY}) {$h$};
       \node[left] at  ({3*\loopR*cos(162) + \leftX}, {3*\loopR*sin(162) + \pY}) {$h$};

       \draw[postaction={decorate}]      plot[domain=180:0] ({\loopR*cos(\x) + \leftX}, {3*\loopR*sin(\x) + \pY});
       \node[left] at  ({\loopR*cos(18) + \leftX}, {3*\loopR*sin(18) + \pY}) {$k$};
       \node[left] at  ({\loopR*cos(162) + \leftX}, {3*\loopR*sin(162) + \pY}) {$k$};
     \end{scope}

     \begin{scope}[very thick,decoration={
           markings,
           mark=at position 0.9 with {\arrow{>}}}]
       \draw[postaction={decorate}]         plot[domain=0:180] ({2*\loopR*cos(\x) + \leftX}, {3*\loopR*sin(\x) + \pY});
       \node[left] at  ({2*\loopR*cos(162) + \leftX}, {3*\loopR*sin(162) + \pY}) {$g$};
     \end{scope}
     \begin{scope}[very thick,decoration={
           markings,
           mark=at position 0.1 with {\arrow{>}}}]
     \draw[postaction={decorate}]       plot[domain=180:0] ({4*\loopR*cos(\x) + \leftX}, {3*\loopR*sin(\x) + \pY});
       \node[left] at  ({4*\loopR*cos(162) + \leftX}, {3*\loopR*sin(162) + \pY}) {$g$};
     \end{scope}
         
       \bRight
}

\nc{\makeBraidThree}{

  \makeBaseTwo

     \begin{scope}[very thick,decoration={
           markings,
          mark=at position 0.8 with {\arrow{>}}}
       ]  
       \draw[postaction={decorate}]      plot[domain=90:0] ({3*\loopR*cos(\x) + \leftX}, {3*\loopR*sin(\x) + \pY});
       \node[right] at  ({3*\loopR*cos(18) + \leftX}, {3*\loopR*sin(18) + \pY}) {$h$};

       \draw[postaction={decorate}]      plot[domain=90:0] ({\loopR*cos(\x) + \leftX}, {3*\loopR*sin(\x) + \pY});
       \node[left] at  ({\loopR*cos(18) + \leftX}, {3*\loopR*sin(18) + \pY}) {$k$};
     \end{scope}

     \begin{scope}[very thick,decoration={
           markings,
           mark=at position 0.1 with {\arrow{>}}}]
       \draw[postaction={decorate}]         plot[domain=180:0] ({2*\loopR*cos(\x) + \leftX}, {3*\loopR*sin(\x) + \pY});
       \node[left] at  ({2*\loopR*cos(162) + \leftX}, {3*\loopR*sin(162) + \pY}) {$kg^{-1}hg$};
     \end{scope}

     \draw[very thick]       plot[domain=90:0] ({4*\loopR*cos(\x) + \leftX}, {3*\loopR*sin(\x) + \pY});
         
       \bRight
}

\begin{figure}
\centering

\begin{tabular}{|c|c|}

  \hline
  
    \begin{tikzpicture}[scale=2.5]

    \begin{scope}[very thick,decoration={
    markings,
    mark=at position 0.5 with {\arrow{>}}}
    ]

       \draw  \mv -- ({\leftX - \loopR}, \pY);
       \draw[postaction={decorate}]    plot[domain=180:0] ({\loopR*cos(\x) + \leftX}, {\loopR*sin(\x) + \pY});
       \node[above] at ({\loopR*cos(90) + \leftX}, {\loopR*sin(90) + \pY}) {$g$};
       \draw   ({\leftX + \loopR}, \pY) -- \mv;
       
       \draw[postaction={decorate}]  \mv -- (\leftX, \pY) node[pos=.5,above]{$h$};

       \draw  \mv -- ({\rightX - \loopR}, \pY);
       \draw[postaction={decorate}] plot[domain=180:0] ({\loopR*cos(\x) + \rightX}, {\loopR*sin(\x) + \pY});
       \node[above] at ({\loopR*cos(90) + \rightX}, {\loopR*sin(90) + \pY}) {$k$};
       \draw   ({\rightX + \loopR}, \pY) -- \mv;
 
       \draw[postaction={decorate}]  \mv -- (\rightX, \pY) node[pos=.5, above]{$l$};       
  
    \end{scope}
    \end{tikzpicture}

&

    \begin{tikzpicture}[scale=2.5]

    \begin{scope}[very thick,decoration={
    markings,
    mark=at position 0.5 with {\arrow{>}}}
    ]  

    \makeBraid
  
    \end{scope}
    \end{tikzpicture}

\\ \hline

    \begin{tikzpicture}[scale=2.5]

      \makeBraidOne
         
    \begin{scope}[very thick,decoration={
    markings,
    mark=at position 0.5 with {\arrow{>}}}
    ]  

       \draw ({\leftX}, {\pY + \loopR}) -- ({\leftX}, {\pY + 4*\loopR});
  
    \end{scope}
    \end{tikzpicture}

&

    \begin{tikzpicture}[scale=2.5]

      \makeBraidTwo     
         
    \end{tikzpicture}

\\ \hline

    \begin{tikzpicture}[scale=2.5]

      \makeBraidThree
         
    \end{tikzpicture}

&

    \begin{tikzpicture}[scale=2.5]

    \begin{scope}[very thick,decoration={
    markings,
    mark=at position 0.5 with {\arrow{>}}}
    ]

       \draw  \mv -- ({\leftX - \loopR}, \pY);
       \draw[postaction={decorate}]    plot[domain=180:0] ({\loopR*cos(\x) + \leftX}, {\loopR*sin(\x) + \pY});
       \node[above] at ({\loopR*cos(90) + \leftX}, {\loopR*sin(90) + \pY}) {$kg^{-1}hg$};
       \draw   ({\leftX + \loopR}, \pY) -- \mv;
       
       \draw[postaction={decorate}]  \mv -- (\leftX, \pY) node[pos=.5,above]{$l$};

       \draw  \mv -- ({\rightX - \loopR}, \pY);
       \draw[postaction={decorate}] plot[domain=180:0] ({\loopR*cos(\x) + \rightX}, {\loopR*sin(\x) + \pY});
       \node[above] at ({\loopR*cos(90) + \rightX}, {\loopR*sin(90) + \pY}) {$g$};
       \draw   ({\rightX + \loopR}, \pY) -- \mv;
 
       \draw[postaction={decorate}]  \mv -- (\rightX, \pY) node[pos=.5, above]{$h$};       
  
    \end{scope}
    \end{tikzpicture}

\\ \hline
\end{tabular}

\caption{Using local moves to calculate the action of a braid generator on an arbitrary element of the spanning set $S$. Read from left to right, then top to bottom.   Unlabeled interior edges are colored by the group identity element. The first two subfigures show application of the braid generator, which interchanges the univalent vertices. The last four show the local moves relating the image to another element of $S$.}
\label{fig:braid1}
\end{figure}
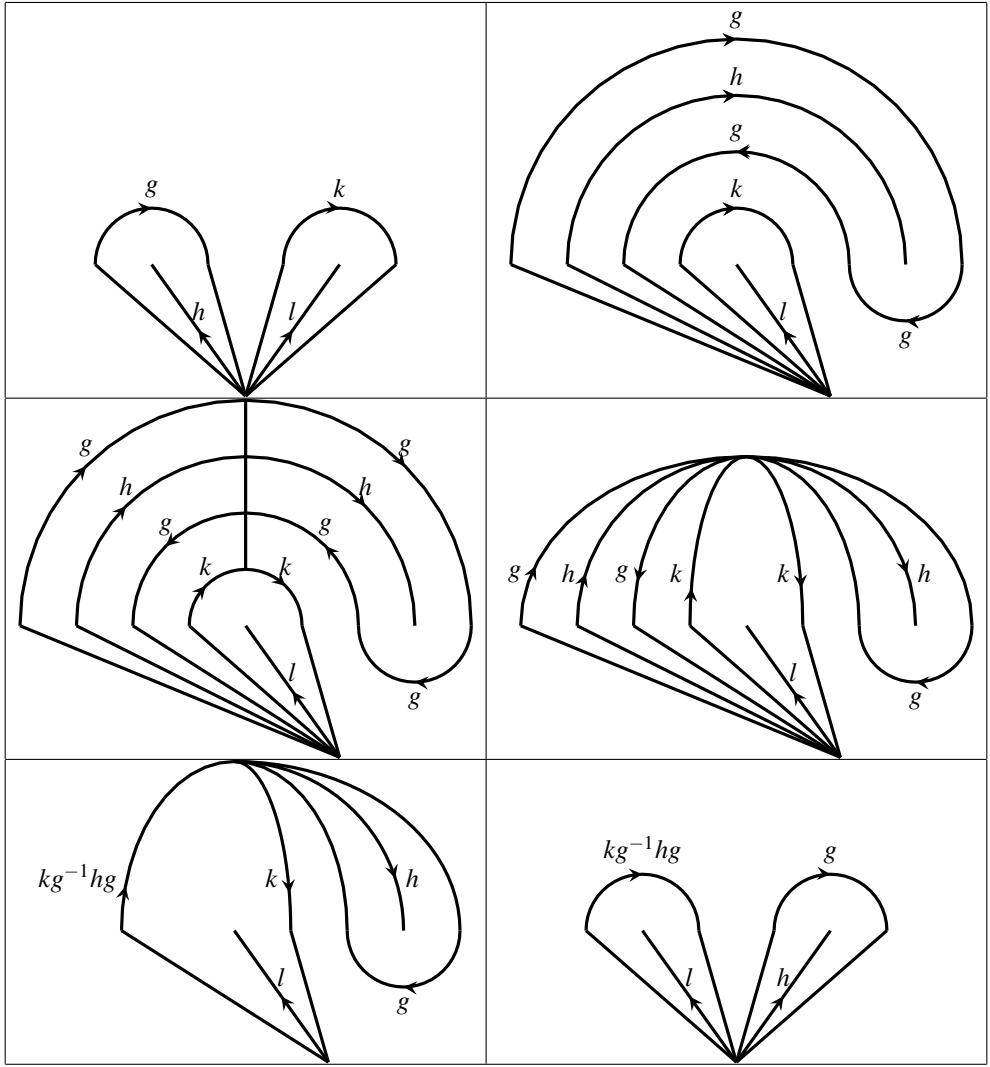


\rnc{\mv}{(0,0)}
\nc{\uv}{(0, 0.7)}
\nc{\rvx}{0.3}
\nc{\rvy}{\lcuty}
\nc{\rv}{(\rvx, \rvy)}
\nc{\lv}{(-\rvx, \lcuty)}

\nc{\cutRatio}{0.5}
\rnc{\lcutx}{{\cutRatio*(-0.5) + (1-\cutRatio)*(-1)}}
\rnc{\lcuty}{{\cutRatio*(0.866)}}
\rnc{\lcut}{(\lcutx, \lcuty)}
\rnc{\rcutx}{{\cutRatio*(0.5) + (1-\cutRatio)*(1)}}
\rnc{\rcut}{(\rcutx, \lcuty)}
\nc{\boundaryComponentx}{0}
\nc{\boundaryComponenty}{\lcuty}
\nc{\boundaryComponent}{(\boundaryComponentx, \boundaryComponenty)}

\nc{\connectorEnd}{({0.5*(-0.85)}, {0.5*0.2598 + 0.5*0.433})}

\nc{\edgeRatio}{0.9}
\nc{\lex}{{\edgeRatio*(-1) + (1-\edgeRatio)*(\lcx)}}
\nc{\ley}{{(1-\edgeRatio)*(\lcy)}}
\nc{\lev}{(\lex, \ley)}
\nc{\rex}{{\edgeRatio*(1) + (1-\edgeRatio)*(\rcx)}}
\nc{\rev}{(\rex, \ley)}

\nc{\eOneRatio}{0.8}
\nc{\leOnex}{{\eOneRatio*(-1) + (1-\eOneRatio)*(\lcx)}}
\nc{\leOney}{{(1-\eOneRatio)*(\lcy)}}
\nc{\leOnev}{(\leOnex, \leOney)}
\nc{\reOnex}{{\eOneRatio*(1) + (1-\eOneRatio)*(\rcx)}}
\nc{\reOnev}{(\reOnex, \leOney)}

\nc{\eTwoRatio}{0.7}
\nc{\leTwox}{{\eTwoRatio*(-1) + (1-\eTwoRatio)*(\lcx)}}
\nc{\leTwoy}{{(1-\eTwoRatio)*(\lcy)}}
\nc{\leTwov}{(\leTwox, \leTwoy)}
\nc{\reTwox}{{\eTwoRatio*(1) + (1-\eTwoRatio)*(\rcx)}}
\nc{\reTwov}{(\reTwox, \leTwoy)}

\nc{\eThreeRatio}{0.6}
\nc{\leThreex}{{\eThreeRatio*(-1) + (1-\eThreeRatio)*(\lcx)}}
\nc{\leThreey}{{(1-\eThreeRatio)*(\lcy)}}
\nc{\leThreev}{(\leThreex, \leThreey)}
\nc{\reThreex}{{\eThreeRatio*(1) + (1-\eThreeRatio)*(\rcx)}}
\nc{\reThreev}{(\reThreex, \leThreey)}

\nc{\eFourRatio}{0.4}
\nc{\leFourx}{{\eFourRatio*(-1) + (1-\eFourRatio)*(\lcx)}}
\nc{\leFoury}{{(1-\eFourRatio)*(\lcy)}}
\nc{\leFourv}{(\leFourx, \leFoury)}
\nc{\reFourx}{{\eFourRatio*(1) + (1-\eFourRatio)*(\rcx)}}
\nc{\reFourv}{(\reFourx, \leFoury)}

\nc{\eFiveRatio}{0.2}
\nc{\leFivex}{{\eFiveRatio*(-1) + (1-\eFiveRatio)*(\lcx)}}
\nc{\leFivey}{{(1-\eFiveRatio)*(\lcy)}}
\nc{\leFivev}{(\leFivex, \leFivey)}
\nc{\reFivex}{{\eFiveRatio*(1) + (1-\eFiveRatio)*(\rcx)}}
\nc{\reFivev}{(\reFivex, \leFivey)}

\nc{\dby}{-0.2} 

\begin{figure}
\centering

\begin{tabular}{|c|c|}

  \hline
  
    \begin{tikzpicture}[scale=3]    

      \makeBdy

    \begin{scope}[very thick,decoration={
    markings,
    mark=at position 0.5 with {\arrow{>}}}
    ]  

      \draw[loosely dashed] \lcut -- \rcut;

      \draw[postaction={decorate}]   (0, \dby) -- \mv node[pos=.5,right]{$p:=hglkl^{-1}h^{-1}$};
      \draw[postaction={decorate}]   \mv -- \rev node[pos=.5,sloped,above]{$h$};
      \draw[postaction={decorate}]   \mv -- \rv node[pos=.5,right]{$gl$};
      \draw[postaction={decorate}]   \mv -- \boundaryComponent node[pos=.5,left]{$k$};
      \draw[postaction={decorate}]   \lv -- \mv node[pos=.5,left]{$l$};
      \draw[postaction={decorate}]   \lev -- \mv node[pos=.5,sloped,below]{$h$};

      \draw[postaction={decorate}]   \rv -- \uv;
      \draw[postaction={decorate}]   \uv -- \mtop node[pos=.5,right]{$g$};
      \draw[postaction={decorate}]   \uv -- \lv;

    \end{scope}
    \end{tikzpicture}

&

    \begin{tikzpicture}[scale=3]    

      \makeBdy

    \begin{scope}[very thick,decoration={
    markings,
    mark=at position 0.5 with {\arrow{>}}}
    ]

      \draw[postaction={decorate}]   (0, \dby) -- \mv node[pos=.5,right]{$p$};
      \draw[postaction={decorate}]   \mv -- \rev node[pos=.5,sloped,below]{$h$};

      \draw[postaction={decorate}]   \mv -- \reOnev node[pos=.8,sloped,red]{$gl$};
      \draw[postaction={decorate}]   \leOnev -- \rv node[pos=.5,sloped,below]{$gl$};
      \draw[postaction={decorate}]   \rv -- \leFivev node[pos=.5,sloped,above]{$gl$};
      \draw[postaction={decorate}]   \reFivev -- \uv node[pos=.5,sloped,above]{$gl$};

      \draw[postaction={decorate}]   \mv -- \reTwov node[pos=.6,sloped,red]{$k$};
      \draw[postaction={decorate}]   \leTwov -- \boundaryComponent node[pos=.5,sloped,red]{$k$};

      \draw[postaction={decorate}]   \reThreev -- \mv node[pos=.5,sloped,above]{$l$};
      \draw[postaction={decorate}]   \lv -- \leThreev;
      \draw[postaction={decorate}]   \leFourv -- \lv node[pos=.5,sloped,above]{$l$};
      \draw[postaction={decorate}]   \uv -- \reFourv node[pos=.5,sloped,below]{$l$};

      \draw[postaction={decorate}]   \lev -- \mv node[pos=.5,sloped,below]{$h$};

      \draw[postaction={decorate}]   \uv -- \mtop node[pos=.5,right]{$g$};

    \end{scope}
    \end{tikzpicture}

\\ \hline

    \begin{tikzpicture}[scale=3]    

      \makeBdy

    \begin{scope}[very thick,decoration={
    markings,
    mark=at position 0.5 with {\arrow{>}}}
    ]

      \draw[postaction={decorate}]   (0, \dby) -- \mv node[pos=.5,right]{$p$};
      \draw[postaction={decorate}]   \mv -- \rev node[pos=.5,sloped,below]{$h$};

      \draw[postaction={decorate}]   \mv -- \reOnev node[pos=.8,sloped,red]{$gl$};
      \draw[postaction={decorate}]   \leOnev --  (-0.35, 0.3031) node[pos=.5,sloped,below]{$gl$};
      \draw[postaction={decorate}]   (-0.35, 0.3031) -- \rv node[pos=.5,sloped,below]{$gl$};

      \draw[postaction={decorate}]   \rv -- \leFivev node[pos=.5,sloped,above]{$gl$};
      \draw[postaction={decorate}]   \reFivev -- \uv node[pos=.5,sloped,above]{$gl$};

      \draw[postaction={decorate}]   \mv -- \reTwov node[pos=.6,sloped,red]{$k$};
      \draw[postaction={decorate}]   \leTwov -- \connectorEnd node[pos=.5,sloped,red]{$k$};
      \draw[postaction={decorate}]   \connectorEnd -- \boundaryComponent node[pos=.5,sloped,red]{$k$};

      \draw[postaction={decorate}]   \reThreev -- \mv node[pos=.5,sloped,above]{$l$};
      \draw[postaction={decorate}]   \lv -- \leThreev;
      \draw[postaction={decorate}]   \leFourv -- \lv node[pos=.5,sloped,above]{$l$};
      \draw[postaction={decorate}]   \uv -- \reFourv node[pos=.5,sloped,below]{$l$};

      \draw[postaction={decorate}]   \lev -- \mv node[pos=.5,sloped,below]{$h$};

      \draw[postaction={decorate}]   \uv -- \mtop node[pos=.5,right]{$g$};

      \draw \lcut -- \lv;
      \draw \rv -- \rcut;
      \draw \mv -- \connectorEnd;
     
    \end{scope}
    \end{tikzpicture}

&

    \begin{tikzpicture}[scale=3]    

      \makeBdy

    \begin{scope}[very thick,decoration={
    markings,
    mark=at position 0.5 with {\arrow{>}}}
    ]

      \draw[postaction={decorate}]   (0, \dby) -- \mv node[pos=.5,right]{$p$};
      \draw[postaction={decorate}]   \mv -- \rev node[pos=.5,sloped,below]{$h$};

      \draw[postaction={decorate}]   \mv -- \reOnev node[pos=.5,sloped,above]{$glk$};
      \draw[postaction={decorate}]   \leOnev --  (-0.35, 0.3031) node[pos=.5,sloped,above]{$glk$};
      \draw[postaction={decorate}]   (-0.35, 0.3031) -- \rv node[pos=.5,sloped,below]{$gl$};

      \draw[postaction={decorate}]   \rv -- \leFivev node[pos=.5,sloped,above]{$gl$};
      \draw[postaction={decorate}]   \reFivev -- \uv node[pos=.5,sloped,above]{$gl$};

      \draw[postaction={decorate}]    (-0.35, 0.3031)  -- \boundaryComponent node[pos=.5,sloped,above]{$k$};

      \draw[postaction={decorate}]   \rv -- \mv node[pos=.5,right]{$l$};
      \draw[postaction={decorate}]   \uv -- \rv node[pos=.5,right]{$l$};

      \draw[postaction={decorate}]   \lev -- \mv node[pos=.5,sloped,below]{$h$};

      \draw[postaction={decorate}]   \uv -- \mtop node[pos=.5,right]{$g$};

      \draw \mv --  (-0.35, 0.3031);
     
    \end{scope}
    \end{tikzpicture}

\\ \hline


 \begin{tikzpicture}[scale=3]    

      \makeBdy

    \begin{scope}[very thick,decoration={
    markings,
    mark=at position 0.5 with {\arrow{>}}}
    ]

      \draw[postaction={decorate}]   (0, \dby) -- \mv node[pos=.5,right]{$p$};
      \draw[postaction={decorate}]   \mv -- \rev node[pos=.5,sloped,below]{$hglk$};

      \draw[postaction={decorate}]   \rv -- \leFivev node[pos=.5,sloped,above]{$gl$};
      \draw[postaction={decorate}]   \reFivev -- \uv node[pos=.5,sloped,above]{$gl$};

      \draw[postaction={decorate}]    \mv  -- \boundaryComponent node[pos=.5,left]{$k$};

      \draw[postaction={decorate}]   \mv -- \rv node[pos=.5,right]{$l^{-1}gl$};
      \draw[postaction={decorate}]   \uv -- \rv node[pos=.5,right]{$l$};

      \draw[postaction={decorate}]   \lev -- \mv node[pos=.5,sloped,below]{$hglk$};

      \draw[postaction={decorate}]   \uv -- \mtop node[pos=.5,right]{$g$};
     
    \end{scope}
    \end{tikzpicture}

&


 \begin{tikzpicture}[scale=3]    

      \makeBdy

    \begin{scope}[very thick,decoration={
    markings,
    mark=at position 0.5 with {\arrow{>}}}
    ]

      \draw[postaction={decorate}]   (0, \dby) -- \mv node[pos=.5,right]{$p$};
      \draw[postaction={decorate}]   \mv -- \rev node[pos=.5,sloped,below]{$hglk$};

      \draw[postaction={decorate}]   \uv -- \leFivev node[pos=.5,sloped,below]{$gl$};
      \draw[postaction={decorate}]   \reFivev -- \uv node[pos=.5,sloped,below]{$gl$};

      \draw[postaction={decorate}]    \mv  -- \boundaryComponent node[pos=.5,left]{$k$};

      \draw[postaction={decorate}]   \mv -- \rv node[pos=.5,right]{$l^{-1}gl$};
      \draw[postaction={decorate}]   \rv -- \uv;

      \draw[postaction={decorate}]   \lev -- \mv node[pos=.5,sloped,below]{$hglk$};

      \draw[postaction={decorate}]   \uv -- \mtop node[pos=.5,right]{$g$};

    \end{scope}
    \end{tikzpicture}

\\ \hline


   \begin{tikzpicture}[scale=3]    

      \makeBdy

    \begin{scope}[very thick,decoration={
    markings,
    mark=at position 0.5 with {\arrow{>}}}
    ]

      \draw[postaction={decorate}]   (0, \dby) -- \mv node[pos=.5,right]{$p$};
      \draw[postaction={decorate}]   \mv -- \rev node[pos=.5,sloped,below]{$hglk$};

      \draw[postaction={decorate}]   \rev -- \uv node[pos=.5,sloped,above]{$gl$};
      \draw[postaction={decorate}]   \uv -- \lev node[pos=.5,sloped,above]{$gl$};

      \draw[postaction={decorate}]    \mv  -- \boundaryComponent node[pos=.5,left]{$k$};

      \draw[postaction={decorate}]   \mv -- \rv node[pos=.5,right]{$l^{-1}gl$};
      \draw[postaction={decorate}]   \rv -- \uv;

      \draw[postaction={decorate}]   \lev -- \mv node[pos=.5,sloped,below]{$hglk$};
      
      \draw[postaction={decorate}]   \uv -- \mtop node[pos=.5,right]{$g$};;

    \end{scope}
    \end{tikzpicture}

&

 \begin{tikzpicture}[scale=3]    

      \makeBdy

    \begin{scope}[very thick,decoration={
    markings,
    mark=at position 0.5 with {\arrow{>}}}
    ]

      \draw[postaction={decorate}]   (0, \dby) -- \mv node[pos=.5,right]{$p$};
      \draw[postaction={decorate}]   \mv -- \rev node[pos=.5,sloped,below]{$hglk$};

      \draw[postaction={decorate}]   \uv -- \leTwov  node[pos=.5,sloped,below]{$gl$};
      \draw[postaction={decorate}]   \reTwov -- \mv  node[pos=.5,sloped,above]{$gl$};

      \draw[postaction={decorate}]    \mv  -- \boundaryComponent node[pos=.5,left]{$k$};

      \draw[postaction={decorate}]   \mv -- \rv;
      \draw[postaction={decorate}]   \rv -- \uv node[pos=.5,right]{$g^2l$};

      \draw[postaction={decorate}]   \lev -- \mv node[pos=.5,sloped,below]{$hglk$};
      
      \draw[postaction={decorate}]   \uv -- \mtop node[pos=.5,right]{$g$};

    \end{scope}
    \end{tikzpicture}

\\ \hline

 \begin{tikzpicture}[scale=3]    

      \makeBdy

    \begin{scope}[very thick,decoration={
    markings,
    mark=at position 0.5 with {\arrow{>}}}
    ]

      \draw[postaction={decorate}]   (0, \dby) -- \mv node[pos=.5,right]{$p$};
      \draw[postaction={decorate}]   \mv -- \rev node[pos=.5,sloped,below]{$hglk$};

      \draw[postaction={decorate}]   \uv -- \leTwov node[pos=.5,sloped,below]{$gl$};
      \draw[postaction={decorate}]   \reTwov -- \mv node[pos=.5,sloped,above]{$gl$};
      \draw \leTwov -- \mv;

      \draw[postaction={decorate}]    \mv  -- \boundaryComponent node[pos=.5,left]{$k$};

      \draw[postaction={decorate}]   \mv -- \rv;
      \draw[postaction={decorate}]   \rv -- \uv node[pos=.5,right]{$g^2l$};

      \draw[postaction={decorate}]   \lev -- \mv node[pos=.5,sloped,below]{$hglk$};
      
      \draw[postaction={decorate}]   \uv -- \mtop node[pos=.5,right]{$g$};

    \end{scope}
    \end{tikzpicture}

&


 \begin{tikzpicture}[scale=3]    

      \makeBdy

    \begin{scope}[very thick,decoration={
    markings,
    mark=at position 0.5 with {\arrow{>}}}
    ]

      \draw[postaction={decorate}]   (0, \dby) -- \mv node[pos=.5,right]{$p$};
      \draw[postaction={decorate}]   \mv -- \rev node[pos=.5,sloped,below]{$hglkl^{-1}g^{-1}$};

      \draw[postaction={decorate}]   \uv -- \lv;
      \draw[postaction={decorate}]   \lv -- \mv node[pos=.5,left]{$gl$};

      \draw[postaction={decorate}]    \mv  -- \boundaryComponent node[pos=.5,left]{$k$};

      \draw[postaction={decorate}]   \mv -- \rv;
      \draw[postaction={decorate}]   \rv -- \uv node[pos=.5,right]{$g^2l$};

      \draw[postaction={decorate}]   \lev -- \mv node[pos=.5,sloped,below]{$hglkl^{-1}g^{-1}$};
  
      \draw[postaction={decorate}]   \uv -- \mtop node[pos=.5,right]{$g$};

    \end{scope}
    \end{tikzpicture}

\\ \hline
\end{tabular}

\caption{Using local moves to calculate the action of the last Birman generator on an arbitrary element of the spanning set $S$.   This generator corresponds to pulling a boundary component of the surface $\Si$ along a generator for the fundamental group of the closed surface given by filling in all boundary components of $\Si$. Read from left to right, then top to bottom.   Unlabeled interior edges are colored by the group identity element. The first two figures show application of the Birman generator. The last eight show the local moves relating the image to another element of $S$.}

\label{fig:drag1}
\end{figure}
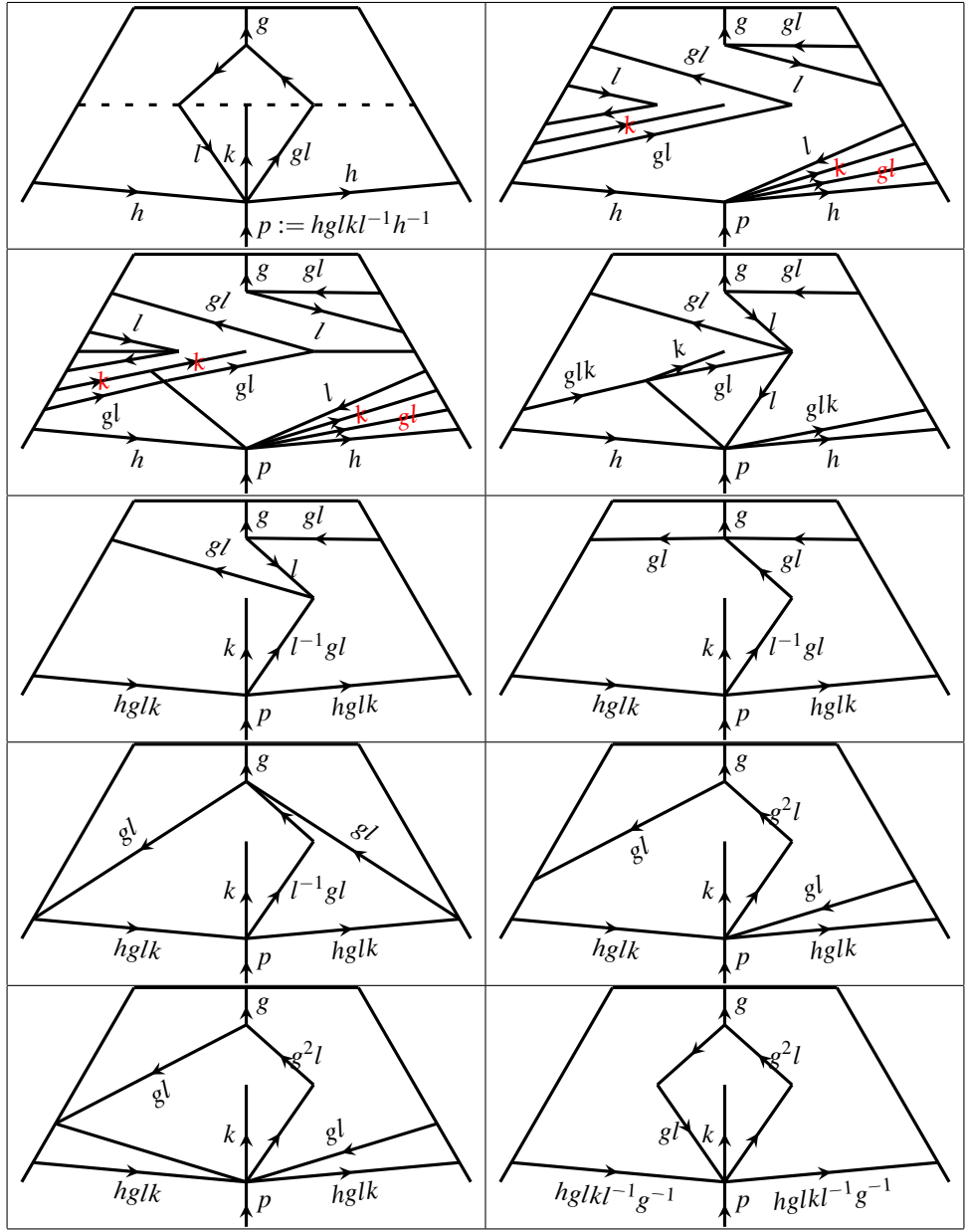

\begin{thm}\label{thm:compact}
The image of any twisted Dijkgraaf-Witten representation of a mapping class group of an orientable, compact surface with boundary is finite.  In particular, the image of any such braid group representation is finite.
\end{thm}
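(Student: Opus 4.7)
The plan is to extend the argument of Theorem \ref{thm:closed} by constructing a finite spanning set of colored graphs adapted to the boundary and then tracking the action of each of Birman's generators of $\MCG(\Sigma)$ on this set.

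First I would build the spanning set. Let $N$ denote the closed surface obtained by filling in every boundary component of $\Sigma$ with a disk. Representing $\Sigma$ as a fundamental polygon for $N$ with small open disks removed, I choose an interior basepoint $v$ and curves realizing the standard generators of $\pi_1(N)$. Applying Theorem \ref{t:RT} on the interior of the polygon reduces any colored graph to a normal form: a single interior vertex at $v$, one loop for each standard $\pi_1(N)$-generator, and a single leg from $v$ to each boundary component, inserting a trivially colored leg where necessary. Linearity in the vertex label and additivity in the edge labels, applied after splitting each loop by a $\coev$-vertex so that the additivity step takes place on an embedded disk, then reduces to simple colorings and produces a finite spanning set $S$.

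Next I would show that each Birman generator carries $S$ into $\mu_{|G|} S$. The Dehn twist generators come from Lickorish generators of $\MCG(N)$, and for these the analysis of Theorem \ref{thm:closed} applies verbatim once the twisting curves are isotoped off the boundary-leg portions of $\Sigma$. For a braid generator interchanging two adjacent boundary endpoints, the induced action bundles two legs together with a half-twist loop; I would resolve it by attaching $\coev$-labeled vertices near the boundary, tensoring parallel edges via the second local move of Figure \ref{f:local_rels1}, and contracting until the shape of an element of $S$ is recovered. For a dragging generator that pushes a boundary component around a representative of a standard generator of $\pi_1(N)$, I would insert $\coev$-vertices along the swept-out curve and alternate parallel-edge tensoring with edge contractions until the normal form returns. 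Proposition \ref{prop:omega} guarantees that each such local move contributes only a factor in $\mu_{|G|}$.

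With the three families of generators handled, $\MCG(\Sigma)$ acts by permutations of the finite set $\mu_{|G|} S$, so the image of the representation is finite; the statement about braid groups follows since $B_n$ is realized as the mapping class group of a disk with punctures, whose standard generators are among the braid-type Birman generators. I expect the main obstacle to be the dragging generator: unlike in the Dehn twist case, the graph produced immediately after the drag is topologically intricate and introduces several conjugation twists in the edge labels, so the bookkeeping needed to return to a simple colored graph and to verify that the accumulated scalar stays inside $\mu_{|G|}$ is the most delicate part of the argument.
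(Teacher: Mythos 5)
Your proposal follows essentially the same route as the paper: reduce to a finite spanning set $S$ of simple colored graphs with one interior vertex, loops for the standard $\pi_1(N)$ generators, and one leg per boundary component, then check that each of Birman's generators (the Dehn twists from the closed case, the braids interchanging boundary components, and the boundary-dragging classes) sends $S$ into $\mu_{|G|}S$ via the local moves and Proposition \ref{prop:omega}. The paper carries out exactly these generator computations (Figures \ref{fig:braid1} and \ref{fig:drag1}), including the lengthier bookkeeping for the dragging generator that you correctly flag as the delicate step, so your outline matches the paper's argument.
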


\section{Further Directions}
We have proved that every twisted Dijkgraaf-Witten representation of a mapping class group of a compact, orientable surface has finite image.  This is a generalization of the results of \cite{erw} and \cite{fjfu}, as well as another step towards the (modified) Property F conjecture. A potential next step would be to consider more complicated spherical categories than $\vgo$.  One candidate is the class of Tambara-Yamagami categories \cite{tambara}.  The main additional complication here is the appearance of multifusion channels, i.e. the tensor product of two simple objects can be a direct sum of multiple simple objects.  

\medskip


\end{document}